\documentclass[12pt]{article}
\usepackage{amsmath,amssymb,amsfonts,amsthm,graphicx,epsfig}
\usepackage[usenames,dvipsnames]{color}
\usepackage{hyperref}
\hypersetup{
    colorlinks=false, %set true if you want colored links
    linktoc=all,     %set to all if you want both sections and subsections linked
    linkcolor=blue,  %choose some color if you want links to stand out
    linktocpage,
}

\newcommand{\old}[1]{}
\newcommand{\eps}{\varepsilon}

\newcommand{\yellow}[1]{}

\newcommand{\R}{{\mathbb R}}

\renewcommand{\R}{\mathbb{R}}

\newtheorem{theorem}{Theorem}

\newtheorem{lemma}[theorem]{Lemma}
\newtheorem{proposition}[theorem]{Proposition}
\newtheorem{corollary}[theorem]{Corollary}

\begin{document}
\date{}

\title
{Sets that Support a Joint Distribution}
\author{
Christopher Coscia\thanks{Department of Mathematics, Tufts University,
Medford, MA 02155. E-mail: {\tt christopher.coscia@tufts.edu}}
\and Martin Tassy
\and Peter Winkler\thanks{Department of Mathematics, Dartmouth
  College, Hanover, New Hampshire 03755. E-mail: {\tt peter.winkler@dartmouth.edu}}
}

\maketitle

\begin{abstract}
Given probability distributions $\mu$ and $\nu$ on measure spaces $X$ and $Y$, and a closed set
$S \subseteq X \times Y$, when is there a probability distribution on
$X \times Y$ whose marginals are $\mu$ and $\nu$, and whose support is precisely $S$?

We answer the question when the marginals are discrete, and when the
marginals are continuous distributions on the real line.  Of special interest is the case where
$S \subseteq [0,1]^2$ and $\mu$ and $\nu$ are Lebesgue measure; then the above question
is tantamount to ``when is $S$ the support of a doubly stochastic measure?".

The discrete case is generalized to determine when a (possibly infinite)
edge-capacitated, node-weighted graph supports a full, nowhere-zero flow;
for the continuous case we provide a particularly straightforward characterization
when the set in question is regular (i.e., is the closure of its interior).
\end{abstract}

\section{Introduction}
We address the fundamental question of what sets of values are possible when two random
variables with given distributions interact.  This issue has presumably arisen before,
but was re-inspired among the authors by recent interest in permutons, about which there
is more below.

Suppose we are given two random variables, one with probability distribution $\mu$ on
measure space $X$, the other with distribution $\nu$ on $Y$ (which we always take to be
disjoint from $X$).  Their joint distribution will be supported by some closed set
$S \subseteq X \times Y$; what sets $S$ are possible?

When $X$ and $Y$ are (finite or countably infinite) discrete sets, the problem can be thought
of in the following graph-theoretic terms.  A bipartite graph $G = \langle X \cup Y,E \rangle$
is given along with positive node weights that sum to 1 on each bipart. When is
there a ``full nowhere-zero flow" from $X$ to $Y$ in $G$, that is, an
assignment of positive weights to the edges of $G$ such that the sum of the weights of
the edges incident to a node is equal to the weight of the node?

The finite case has a matrix interpretation as well.  An $m \times n$ $\{0,1\}$ ``template"
matrix $M$ is provided along with real vectors $\vec{r} = (r_1,\dots,r_m)$ and
$\vec{c} = (c_1,\dots,c_n)$.  The question now is whether there is an $m \times n$
nonnegative-real-valued matrix $A$ with row sums $\vec{r}$ and column sums $\vec{c}$,
whose entries are positive exactly where there is a 1 in $M$.

In the continuous case, we limit ourselves to distributions $\mu$ and $\nu$ on the
real line $\R$ which are absolutely continuous with respect to Lebesgue measure. Then,
given a closed set $S \subseteq \R \times \R$, we ask whether there is a Borel probability
distribution on $\R \times \R$ supported by $S$ whose marginals are $\mu$ and $\nu$.
In such cases we can (and usually will) simplify by defining continuous increasing maps
$f$ and $g$ from $\R$ to $[0,1]$ such that $\mu(A) =
\lambda(f[A])$ and $\nu(B) = \lambda(g[B])$ for $A,B \subseteq \R$.  Then, replacing
$S$ by $(f \times g)[S] \subseteq [0,1] \times [0,1]$, we reduce the problem to the case
where $\mu$ and $\nu$ are both Lebesgue distributions $\lambda$ on unit intervals and
$S$ is a closed subset of a unit square.  (This reduction is known among statisticians
as Sklar's Theorem \cite{Sklar}.)

Borel measures on the unit square with uniform marginals are familiar objects in many
communities, and have been known at times as ``doubly stochastic measures" or two-dimensional
``copulas."  More recently, in \cite{HKMS}, they entered combinatorics as limit objects for
permutations, called {\em permutons.} Permutons enjoy a powerful variational principle
\cite{Tr,Mu,KKRW,BDMW} that can provide an asymptotic description of random members of
a large set of permutations.  The space $\mathcal{P}$ of permutons is compact in the
topology induced by the $L^\infty$ metric on cumulative distribution functions; equivalently,
a sequence of permutons converges if the mass they assign to any fixed Lebesgue-measurable set in
$[0,1]^2$ converges.

Permutons may be nonsingular (that is, absolutely continuous with respect to Lebesgue
measure).  The other extreme is a ``totally singular" permuton whose
support has Lebesgue measure 0.  Both types, and mixtures of the two, arise in the study
of large permutations.  Fig.~\ref{fig:test-sets} below shows eight subsets of the unit
square; which can support a permuton?

\begin{figure}[ht!]
        \centering
        \includegraphics[width=.8\linewidth,trim=0cm 9cm 0cm 6cm, clip=true]{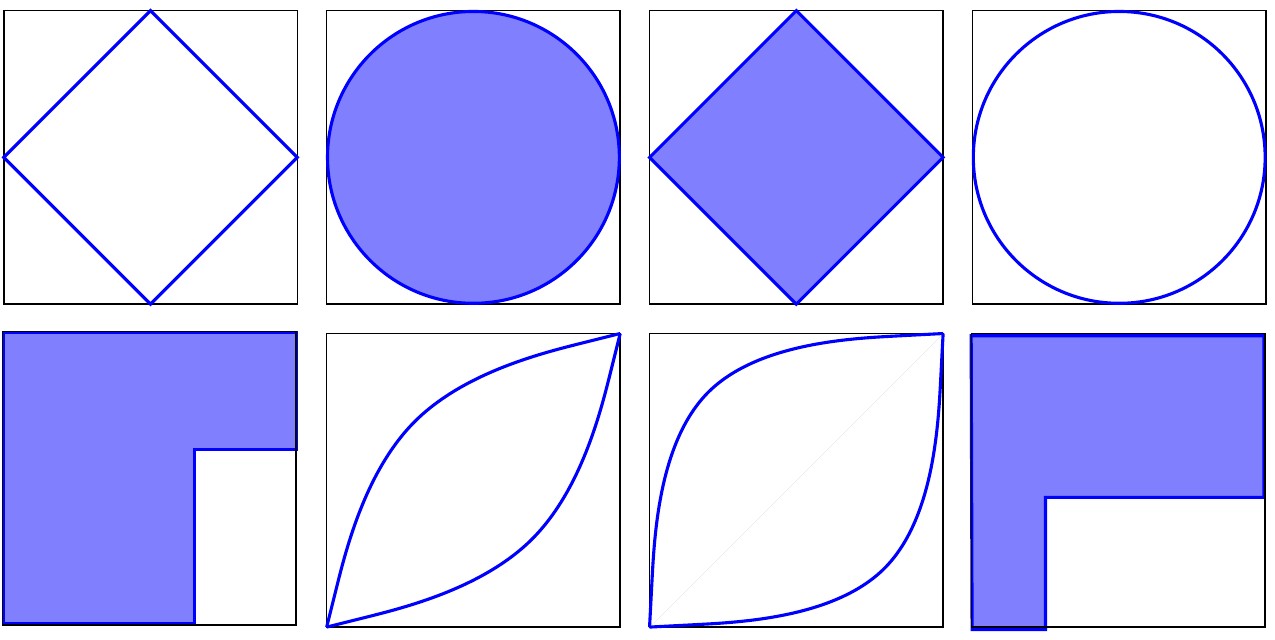}
        \caption{Which of these subsets of $[0,1]^2$ are the support of a probability
measure with uniform marginals?}
        \label{fig:test-sets}
\end{figure}

\section{Feasibility and Compliance}

We assume throughout this section that $X$, $Y$, $\mu$, $\nu$, and $S \subseteq X \times Y$
are given and fixed.  Together these data constitute a {\em system}, which we term
{\em feasible} if there is a distribution $\gamma$ on $X \times Y$ with $X$-marginal
$\mu$ and $Y$-marginal $\nu$, whose support is contained in $S$; and {\em solvable}
if there is one whose support is exactly $S$.  We will always assume either that
$\mu$ and $\nu$ are discrete, or that they are continuous distributions
on $\R$---in which case $S$ is assumed to be a closed set.

With the aim of simplifying expressions when possible, we will use absolute value signs
to denote the measures $\mu$ and $\nu$, both for subsets of $X$ or $Y$ and (in the discrete
case) individual elements.  Thus, for example, if $x \in U \subseteq X$, then $|x| := 
\mu(\{x\})$ and $|U| := \mu(U)$.

Given $U \subseteq X$ or $V \subseteq Y$, we denote by $N(U)$ (the ``neighborhood" of $U$,
a term borrowed from the graph-theoretic interpretation) the set of all $y \in Y$ such
that $(x,y) \in S$, and similarly for $N(V) \subseteq X$.

We begin with the (easier) question of feasibility, the answer to which, at least in
the discrete case, is well known.  In all cases an obvious necessary condition for
feasibility is that for all $U \subseteq X$, $|N(U)| \ge |U|$. 
(Otherwise, $|U| = \gamma((U \times Y) \cap S) > \gamma((X \times N(U)) \cap S) = |N(U)|$,
but $(U \times Y) \cap S \subseteq (X \times N(U)) \cap S$.)  This is sometimes
called the (weighted) ``Hall condition," since, in Philip Hall's famous marriage theorem,
it guarantees a matching covering all of $X$ in the uniform discrete case.  We don't
actually need a matching here, only (in the discrete case) a full flow.  In all cases,
discrete or continuous, the Hall condition is sufficient as well as necessary to get a joint
distribution whose support is a {\em subset} of $S$.

\begin{theorem}\label{thm:feasibility}
A finite system $\langle X,Y,\mu,\nu,S \rangle$ is feasible if and only if for every
measurable $U \subset X$, $|N(U)| \ge |U|$.
\end{theorem}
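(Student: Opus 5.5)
Necessity was already shown in the discussion before the statement, so the plan is to prove sufficiency by reducing to max-flow/min-cut on a finite network. Since the system is finite, $X$ and $Y$ are finite sets with positive weights $|x|$, $|y|$ summing to $1$ on each side. Build the network with a source $s$, a sink $t$, an arc $s\to x$ of capacity $|x|$ for each $x\in X$, an arc $y\to t$ of capacity $|y|$ for each $y\in Y$, and an arc $x\to y$ of infinite capacity for each $(x,y)\in S$. A flow of value $1$ saturates every source and sink arc. Setting $\gamma(\{(x,y)\})$ equal to the flow on $x\to y$ then gives a probability distribution with marginals $\mu$ and $\nu$ whose support lies in $S$, which is exactly feasibility. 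Conversely, any such $\gamma$ gives a flow of value $1$, although that direction is not needed.

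By the max-flow min-cut theorem, with real capacities and a finite network, it therefore suffices to show that every $s$--$t$ cut has capacity at least $1$. Take a cut $(A,\bar A)$ with $s\in A$, and write $U=X\setminus A$ and $W=Y\cap A$. If any edge of $S$ runs from $X\cap A$ to $Y\setminus A$, the capacity is infinite and there is nothing to prove. Otherwise $N(X\cap A)\subseteq W$, and the capacity equals
\[
|U|+|W| \;\ge\; |U| + |N(X\setminus U)| \;\ge\; |U| + |X\setminus U| \;=\; 1 ,
\]
where the last inequality is the Hall condition applied to the set $X\setminus U$. Hence the maximum flow equals $1$, the upper bound coming from the cut $\{s\}$.

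No step is a serious obstacle. The only point needing care is the infinite-capacity trick, which forces any finite cut to respect neighborhoods. If infinite capacities are unwelcome, capacity $2$ (or anything exceeding $1$) on the middle arcs serves equally well, because a single crossing middle arc already pushes the cut capacity above $1$. Alternatively, one can avoid network flows and prove the same min-cut statement by LP duality, or by a discrete approximation combined with the weighted Hall theorem; the flow formulation is the cleanest.
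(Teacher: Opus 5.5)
Your proposal is correct and is essentially the paper's proof: the same source--sink network with unlimited capacity on the arcs of $S$, and the same observation that a finite cut must contain the sink arcs of $N(X\cap A)$ and the source arcs of $X\setminus A$, so Hall's condition forces its capacity to be at least $1$. The only differences are cosmetic: you bound every cut directly instead of arguing by contradiction, and your $U$ is the complement of the paper's.
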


\begin{proof}
We will say that $U \subset X$ is {\em compliant} if $|N(U)| \ge |U|$,
and that a system is compliant if every measurable subset of $X$ is compliant.
We have already noted that compliance is a necessary condition for feasibility.

To prove sufficiency let us first observe that although it might not seem so, compliance
is symmetric in $X$ and $Y$.  The reason is simply that $N(Y \setminus N(U)) = X \setminus U$,
thus $|N(U)| - |U| = |N(Y \setminus N(U))| - |Y \setminus N(U)|$, so $U$ is compliant iff
$Y \setminus N(U)$ is.  (We are relying here on $|X|=|Y|$, but not on $|X|=1$; we don't
need $\mu$ and $\nu$ to be probability measures, just finite measures with the same total
mass.)

The finite case of sufficiency is a familiar consequence of the max-flow min-cut theorem
(see, e.g., Chv\'atal \cite{C}).  Let $D$ be the directed graph described as follows.
The vertices of $D$ consist of the elements of $X$ and of $Y$ together with a source
$s$ and a sink $t$.  Exiting $s$ is an arc to each $x \in X$ of capacity $|x|$; exiting
each $y \in Y$ is an arc to $t$ of capacity $|y|$.  Each pair $(x,y) \in S$ produces
an arc from $x$ to $y$ of unlimited capacity.

Fig.~\ref{fig:flow} illustrates a network corresponding to the system whose matrix
formulation stipulates row sums $(.1,.3,.2,.1,.3)$, column sums $(.2,.4,.3,.1)$, and
$5 \times 4$ template matrix
$$
\begin{pmatrix}
1 & 1 & 0 & 0 \\
1 & 0 & 0 & 0 \\
0 & 1 & 0 & 0 \\
1 & 0 & 1 & 1 \\
0 & 1 & 0 & 1 
\end{pmatrix}
$$

The cut shown corresponds to the Hall-tight set $\{1,2,3\} \subset X$.  Since it is a
minimum cut and has capacity $1 = |X|$, there is indeed a full flow (but not one that
uses all the arcs; the two edges that ``jump" the cut in the wrong direction will
carry no mass in a full flow).

\begin{figure}[ht!]
        \centering
        \includegraphics[width=.8\linewidth,trim=0cm 7cm 0cm 5cm, clip=true]{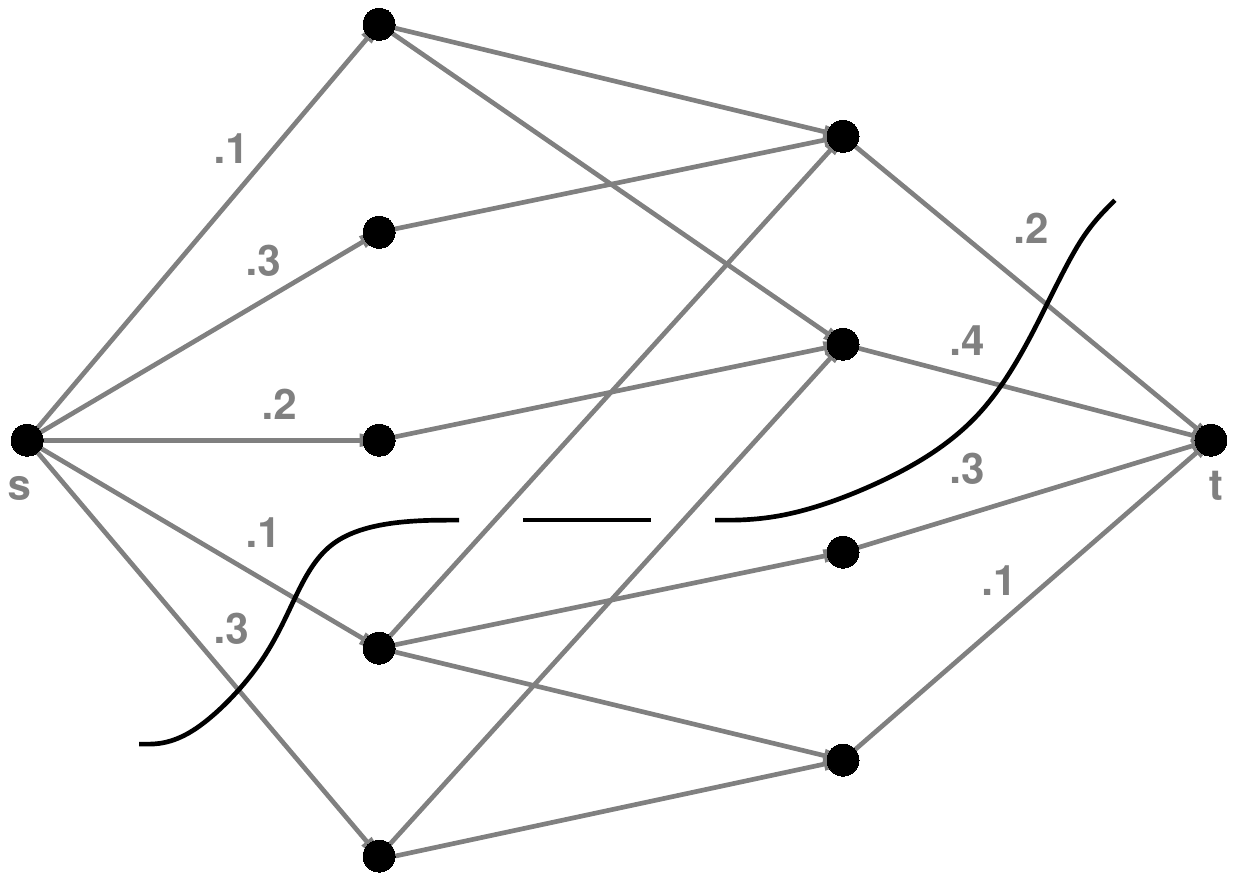}
        \caption{The network corresponding to a system, and a minimum cut.
The unlabeled edges have unlimited capacity.}
        \label{fig:flow}
\end{figure}

A flow of magnitude 1 on $D$ is what's needed, and if it is not
present there is a cut of capacity less than 1.  No edge of $D$ can have been cut;
thus, if $U$ is the set of $X$-nodes on the source side of the cut, all edges from
the source to $X \setminus U$ and from $N(U)$ to the sink must have been cut.  Then
$$
|N(U)| - |U| = |N(U)| + |X \setminus U| - 1 < 0
$$
so $U$ is not compliant.
\end{proof}

\medskip

The infinite discrete case will follow from the continuous case, which we proceed to now.
We begin with a continuous analog of (a special case of) the max-flow min-cut theorem.
Henceforth, all subsets to which we refer are assumed to be Lebesgue measurable.

\begin{theorem}\label{thm:cont-mfmc}
Let $\alpha$ and $\beta$, respectively, be distributions with the same total mass
on copies $X$ and $Y$ of the unit interval.  Suppose $\alpha$ and $\beta$ are each
bounded by Lebesgue measure, and let $S$ be a closed subset of the unit square $X \times Y$.
Let $r := \inf_{W \subseteq X} r(W)$ where $r(W) := \beta(N(W)) + \alpha(X \setminus W)$.
Then there is a distribution $\mu$ supported in $S$ with total mass $m$ and marginals
bounded by $\alpha$ and $\beta$, if and only if $0 \le m \le r$.
\end{theorem}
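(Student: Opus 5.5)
Necessity is the easy direction. Suppose $\mu$ is supported in $S$, has total mass $m$, and its marginals $\mu_X,\mu_Y$ satisfy $\mu_X\le\alpha$ and $\mu_Y\le\beta$. Fix any $W\subseteq X$ and split the mass of $\mu$ according to whether the $X$-coordinate lies in $W$:
$$m=\mu((W\times Y)\cap S)+\mu(((X\setminus W)\times Y)\cap S).$$
The first piece lies in $X\times N(W)$, so it is at most $\mu_Y(N(W))\le\beta(N(W))$. The second piece is at most $\mu_X(X\setminus W)\le\alpha(X\setminus W)$. Hence $m\le r(W)$ for every $W$, and therefore $m\le r$. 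Whenever a measurable set is needed, we may replace $N(W)$ by a measurable superset (for instance its closure when $W$ is closed).

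Conversely, given $0\le m\le r$, it suffices to build one measure of mass exactly $r$, since scaling it by $m/r$ handles smaller $m$. The plan is to discretize and then pass to a limit.

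\textbf{Discretization.} For each $n$, cut $X$ and $Y$ into the $2^n$ dyadic intervals $I_i$, $J_j$. Put node weights $\alpha(I_i)$ and $\beta(J_j)$, and join $i$ to $j$ whenever $S$ meets the closed box $\bar I_i\times\bar J_j$. This gives a finite network $D_n$ of the kind used in the proof of Theorem~\ref{thm:feasibility}.

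\textbf{Finite cuts.} By max-flow min-cut, the maximum flow in $D_n$ equals $\min_U\bigl(\beta(N_n(U))+\alpha(X\setminus U)\bigr)$, where $U$ ranges over sets of $X$-nodes. Let $W_U$ be the union of the intervals in $U$. Since $N(W_U)\subseteq N_n(U)$, each such cut is at least $r(W_U)\ge r$. So $D_n$ carries a flow $f_n$ of value at least $r$; we may reduce it to value exactly $r$ while staying within the capacities.

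\textbf{Passing to the limit.} Spread each flow value $f_n(i,j)$ over a small measure on $\bar I_i\times\bar J_j$. A good choice is the product $\bigl(\alpha|_{I_i}/\alpha(I_i)\bigr)\otimes\bigl(\beta|_{J_j}/\beta(J_j)\bigr)$ scaled by $f_n(i,j)$. This makes the marginals exactly bounded by $\alpha$ and $\beta$. The resulting measure $\mu_n$ has mass $r$ and is supported in the closed $2^{-n}\sqrt2$-neighbourhood $S_n$ of $S$.

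Take a weakly convergent subsequence $\mu_n\to\mu$, which exists by compactness of the square. Then $\mu$ has mass $r$. It is supported in $\bigcap_n S_n=S$, using that $S$ is closed and the $S_n$ are nested closed sets. The bounds $\mu_X\le\alpha$ and $\mu_Y\le\beta$ survive the weak limit: for any continuous $\phi\ge0$ we have $\int\phi\,d\mu_{n,X}\le\int\phi\,d\alpha$, and this inequality passes to the limit.

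\textbf{Main obstacle.} The delicate step is matching the discrete min-cut with the continuous infimum. One needs a cut of $D_n$ to dominate some $r(W)$, and the key fact making this work is that box-adjacency only enlarges neighbourhoods. The hypothesis that $\alpha,\beta$ are bounded by Lebesgue measure (hence atomless) ensures that boundary effects on interval endpoints carry no mass. I would check measurability of $N(W)$ carefully, using that $N(K)$ is compact for compact $K$ and approximating $W$ by compact sets from inside if needed.
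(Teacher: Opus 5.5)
Your proposal is correct and follows essentially the same route as the paper: a dyadic discretization into a network on the occupied cells, max-flow min-cut with every finite cut dominating some $r(W)$, independent (product) measures on the cells, a compactness limit, and closedness of $S$ to confine the support. The differences are cosmetic. The paper also shows, via submodularity, that the infimum defining $r$ is attained, which is used later but not needed here, while you spell out the necessity direction and why the marginal bounds and the support survive the weak limit, points the paper leaves implicit.
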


\begin{proof} Recall that a set function $f$ is {\em submodular} if $f(U \cup V) + f(U \cap V)
\le f(U) + f(V)$ for any sets $U$ and $V$, and {\em modular} if the quantities are equal.
The function $r(\cdot)$ is submodular since $|N(W)|$ is submodular and $|X \setminus W|$
is modular.\footnote{In fact, $r$ itself is submodular as a function of $X$; see, e.g.,
\cite{L2}, Theorem 2.5, where it is shown that the ``convolution" of a modular and a
submodular function is submodular.}  We use this observation to show that
the $\inf$ in the definition of $r$ can be replaced by $\min$, that is, there is always a
``realizer" $W$ for which $r = r(W)$.  To see this, let $W_1, W_2, \dots$ be chosen
so that $r(W_i)-r < 2^{-i}$.  Since $r(\cdot)$ is submodular and $r(W_i \cap W_j) \ge r$
by definition of $r$, we deduce that $r(\bigcup_{i>k} W_i) \le r + 2^{-k}$ and thus
$W := \bigcap_k (\bigcup_{i>k} W_i)$ will do the job.

For each $n>0$ let us cover $X \times Y$ with a $2^n \times 2^n$ dyadic square grid $G_n$;
cells that intersect $S$ are said to be {\em occupied}, else {\em empty}. Let $S_n$ be the
set of occupied cells.  The $2^n$ equal intervals into which $X$ has
been divided constitute a discrete set $X_n$ with measure induced by $\alpha$, and similarly for
$Y$, $Y_n$ and $\beta$.  $S_n$ induces a directed network $D_n$ on vertices
$\{s\} \cup X_n \cup Y_n \cup \{t\}$ as in the discrete case argument above.
A finite-capacity cut in this network can cut only edges incident to $s$ or $t$; if $W_n$
is the set of vertices of $X_n$ whose edges to $s$ are not cut, then the capacity of the cut is
$r_n := \beta(N(W_n)) + \alpha(X_n \setminus W_n)$.  Max-flow min-cut gives us a discrete distribution
of mass $r_n$ on the grid cells, with marginals bounded by $\alpha$ and $\beta$; that can be
converted to a continuous distribution $\mu_n$ on $\bigcup S_n$ with marginals bounded by
$\alpha$ and $\beta$, by taking the distribution on each cell to be independent with the
appropriate mass and marginals.

It follows from compactness that the sequence of distributions $\{\mu_n\}_n$ has a limit
measure $\mu$, still with marginals bounded by $\alpha$ and $\beta$, and total mass
$\ge \lim\inf r_n$.  But each $r_n \ge r$ since for each $n$, $\bigcup W_n$ satisfies the conditions
defining $r$. It follows that the total mass of $\mu$ is exactly $r$, and of course $\mu$
can be cut down by a constant factor to give any total mass between 0 and $r$.

It remains only to verify that $\mu$ is supported in $S$.  Since $S$ is closed, if the point
$P \not\in S$ then there is a dyadic square $Q = [(i{-}1)2^{-k},i2^{-k}] \times [(j{-}1)2^{-k},j2^{-k}]$
containing $P$ and contained within $X \times Y \setminus S$.  For $n \ge k$ all the dyadic
cells comprising $Q$ will be empty in $G_n$, thus $P$ is not in the support of $\mu$. \end{proof}

\begin{corollary}\label{cor:shaved}
If $\alpha$ and $\beta$ have the same total mass $c$ and $S$ is compliant (meaning, here,
that $\beta(N(U)) \ge \alpha(U)$ for any (measurable) $U \subseteq X$) then there is a
measure supported in $S$ of total mass $c$ whose marginals are exactly $\alpha$ and $\beta$.

In the case that $\alpha = \beta =$ Lebesgue measure, if $S \subseteq X \times Y$ is closed
and compliant, there is a permuton supported in $S$.
\end{corollary}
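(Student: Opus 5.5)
The natural route is to apply Theorem~\ref{thm:cont-mfmc} and show that compliance forces $r = c$. First, $r \le c$ because taking $W = \emptyset$ gives $r(\emptyset) = \beta(N(\emptyset)) + \alpha(X) = 0 + c = c$. Conversely, for any measurable $W \subseteq X$, compliance gives $\beta(N(W)) \ge \alpha(W)$, hence
$$
r(W) = \beta(N(W)) + \alpha(X\setminus W) \ge \alpha(W) + \alpha(X \setminus W) = c .
$$
So $r = c$. Theorem~\ref{thm:cont-mfmc} then yields a distribution $\mu$ supported in $S$, of total mass $c$, whose marginals $\mu_X$ and $\mu_Y$ satisfy $\mu_X \le \alpha$ and $\mu_Y \le \beta$.

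The remaining step is to upgrade ``bounded by'' to ``equal to''. Since $\alpha - \mu_X$ is a nonnegative measure of total mass $c - c = 0$, it vanishes, so $\mu_X = \alpha$; likewise $\mu_Y = \beta$. This step is immediate. The only point needing care is that $\alpha$ and $\beta$ are bounded by Lebesgue measure, as the theorem requires; this holds in the stated application. For general absolutely continuous marginals, one would instead first rescale by a constant, or reduce via the Sklar-type reparametrization from the introduction.

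For the second sentence, take $\alpha = \beta = \lambda$, which has total mass $c = 1$ and is trivially bounded by Lebesgue measure. Compliance of the closed set $S$ is exactly the hypothesis above, so the first part gives a probability measure supported in $S$ with both marginals equal to Lebesgue measure, that is, a permuton supported in $S$.

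I expect no real obstacle. The only subtle point is making sure that the compliance condition, stated for measurable $U$, applies to every $W$ ranging in the infimum of Theorem~\ref{thm:cont-mfmc}. This holds because the paper assumes all subsets are measurable. One should also note that $N(W)$ is measurable, or interpret $\beta(N(W))$ consistently (for instance as outer measure) in both the theorem and the compliance hypothesis.
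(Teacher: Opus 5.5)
Your proof is correct and is exactly the intended argument: the paper gives the corollary as an immediate consequence of Theorem~\ref{thm:cont-mfmc}, where compliance gives $r(W) \ge \alpha(W) + \alpha(X\setminus W) = c$, hence a mass-$c$ measure in $S$ with marginals bounded by $\alpha$ and $\beta$, and equality of total masses forces the marginals to be exactly $\alpha$ and $\beta$. Your side remarks, on the implicit Lebesgue-boundedness hypothesis and on the measurability of $N(W)$, address points the paper leaves implicit and do not change the argument.
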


A measure supported in $S$ with given Lebesgue-bounded marginals, such as guaranteed
by the above corollary, will be called a ``subpermuton."

To get the infinite discrete case from Theorem~\ref{thm:cont-mfmc} or
Corollary ~\ref{cor:shaved}, we just divide up $[0,1]$ into intervals $I_x$ of length
$|x|$ for each $x \in X$, and another copy of $[0,1]$ similarly into $J_y$ for each $y \in Y$. 
Now the permuton case is applied with support set $\bigcup_{(x,y) \in S}[I_x \times J_y]$,
noting that the neighborhood of a set $U$ in the (horizontal) unit interval is the union of
all the $J_y$ for which $U$ merely intersects some $I_x$ with $(x,y) \in S$.  Thus the
compliance of sets that aren't unions of $I_x$'s follows from the compliance of those
that are.

\section{Solvability and Expansiveness}

We have seen that compliance equals feasibility.  Determining whether a system is solvable,
that is, whether $S$ actually is the support of some joint distribution, is a bit more subtle.
One apparent obstruction to solvability in the discrete case is the existence of a subset
$U \subset X$ which is ``tight'' for compliance, that is, $|N(U)| = |U|$, together with a pair
$(x,y) \in S$ with $y \in N(U)$ but $x \not\in U$.  Then all of the flow to $N(U)$ must
come from $U$, so nothing can flow from $x$ to $y$.

In general, we say that $U \subset X$ is {\em expansive} if either $|N(U)| > |U|$, or
$|N(U)| = |U| = |N(N(U))|$.  We call $S$ expansive if every measurable $U \subset X$
is expansive.  We observe immediately, for continuous as well as discrete systems,
that expansiveness is necessary for solvability.

Spoiler alert: Expansiveness is not sufficient for solvability in the general continuous case.

\subsection{The finite discrete case}

Our general approach for showing that expansiveness is sufficient for solvability in discrete
cases will be to show that, in the presence of expansiveness, for any edge $e = (x,y) \in S$
there is a full flow $\phi_e$ that is positive on $e$.  A convex combination of the flows
$\phi_e$, for all $e \in S$, will then be nowhere zero and still full, proving solvability.

To get $\phi_e$, we lower the weights of $e$'s endpoints by some $\eps > 0$, show that the
resulting system is still compliant, apply Theorem~\ref{thm:feasibility} to get a flow
$\phi$ of mass $1 - \eps$, then add flow $\eps$ to $\phi(e)$.

It will be helpful to give a name to the degree to which a set is compliant.  The {\em slack}
$s(U)$ of a subset of $X$ (or of $Y$) is defined as $|N(U)|-|U|$.  Thus a compliant system
is one in which all slacks are nonnegative; a system is expansive iff it is compliant and for
all $U \subset X$, $s(U) = 0$ implies $s(N(U)) = 0$.  

\begin{theorem}\label{thm:finite} A finite system is expansive iff it is solvable.
\end{theorem}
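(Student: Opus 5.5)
Necessity has already been observed: if $\gamma$ is a full flow with support exactly $S$ and $|N(U)|=|U|$, then every edge into $N(U)$ must originate in $U$. Hence $N(N(U)) \subseteq U$, and so $|N(N(U))| \le |U|$. Compliance of $N(U)$ (compliance is symmetric, as shown in the proof of Theorem~\ref{thm:feasibility}) gives $|N(N(U))| \ge |N(U)| = |U|$. Thus $U$ is expansive.

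For sufficiency we follow the outlined strategy. Fix an edge $e=(x,y)\in S$. Lower $|x|$ and $|y|$ by $\eps$ to get weights $|\cdot|'$, with total mass $1-\eps$ on each side. If the new system is compliant, Theorem~\ref{thm:feasibility} (which, as remarked there, only needs equal total masses) yields a full flow $\phi$ for $|\cdot|'$ supported in $S$. Then $\phi_e := \phi + \eps\,\mathbf{1}_e$ is a full flow for the original weights that is positive on $e$. Averaging the $\phi_e$ over all $e\in S$ gives a full flow positive on every edge of $S$ and zero off $S$, which proves solvability.

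So everything reduces to showing that the modified system is compliant for some small $\eps>0$. For $U\subseteq X$ the new slack is
\[
s'(U) = s(U) - \eps[y\in N(U)] + \eps[x\in U].
\]
This can only decrease, and then only by $\eps$, when $y\in N(U)$ and $x\notin U$. Since $X$ is finite, there are finitely many $U$, so let $\delta>0$ be the least positive slack and take $\eps<\delta$; also take $\eps\le\min(|x|,|y|)$ so weights stay nonnegative. Then $s'(U)\ge 0$ for all $U$ with $s(U)>0$. It remains to handle tight sets $U$ with $s(U)=0$, $y\in N(U)$ and $x\notin U$.

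This is the main step, and expansiveness is what rules such sets out. Suppose $U$ is tight. Expansiveness gives $|N(N(U))| = |U|$, and trivially $U\subseteq N(N(U))$. Since $y\in N(U)$ and $(x,y)\in S$, we have $x\in N(N(U))$, and $x\notin U$. So $|N(N(U))| \ge |U|+|x| > |U|$, provided $|x|>0$. Node weights are positive by hypothesis, so this contradicts $|N(N(U))|=|U|$. Hence no such $U$ exists, the lowered system is compliant, and the argument is complete. The one point of care is that the finiteness of $X$ is what supplies the uniform gap $\delta$; this is exactly what will fail, and need replacement, in the infinite and continuous cases.
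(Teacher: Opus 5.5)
Your proof is correct and follows the paper's argument. You lower $|x|$ and $|y|$ by $\eps$, use expansiveness to rule out tight sets $U$ with $y\in N(U)$ and $x\notin U$, and take $\eps$ below the finitely many remaining positive slacks. You then invoke Theorem~\ref{thm:feasibility} for the reduced mass, restore $\eps$ on $e$, and average over $e\in S$. The differences are cosmetic: you take $\delta$ over all positive slacks rather than only over the vulnerable sets, and you spell out necessity and the inclusion $U\subseteq N(N(U))$. That inclusion is not quite trivial, but it holds because compliance with positive weights gives every vertex a neighbor.
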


\begin{proof} Following the script above, let $e = (x,y) \in S$ in an expansive system.
Suppose $U \subset X$ with $y \in N(U)$ but $x \not\in U$.  Then
$x \in N(N(U)) \setminus U$, so $s(U) > 0$ by expansiveness.  Let $s$ be the minimum slack
among all such $U$, and let $0 < \eps < \min(|x|,|y|,s)$.  We define a new system with $S$
unchanged, and $\mu$ and $\nu$ as before except $|x|$ and $|y|$ have been lowered by
$\eps$ (thus the mass of $X$ and of $Y$ in the new system is only $1-\eps$).  Apply
Theorem~\ref{thm:feasibility} to get a flow of mass $1 - \eps$, and add back $\eps$
worth of flow to the edge $e$ to get a full flow $\phi_e$ in the original system
that is positive on $e$.  Put $\phi = \frac1{|S|}\sum_{e \in S}\phi_e$ to get a full
nowhere-zero flow as required.
\end{proof}

\subsection{Nonbipartite graphs and networks}

There are (at least) two ways to extend Theorem~\ref{thm:finite} to general
graphs.\footnote{Thanks to Richard Stanley for suggesting extension to nonbipartite graphs.}
For the next theorem, if $U$ is a set of vertices, $N(U)$ denotes the excluded
neighborhood $\{v \not\in U:~u \sim v$ for some $u \in U\}$.
A set of edge weights will be said to be {\em compatible} with a set of vertex
weights if the weight of each vertex is the sum of the weights of its incident edges.

\begin{theorem} \label{thm:nonbip1} Let $G$ be a finite undirected graph with
positive vertex weights $u \mapsto |u|$.  Then there is a compatible nonnegative edge
weighting $e \mapsto w(e)$ iff for every nonempty independent set $I \subset V(G)$,
$|N(I)| \ge |I|$; and there is a compatible positive edge weighting if, in addition, 
whenever $|N(I)| = |I|$, $N(N(I)) = I$.
\end{theorem}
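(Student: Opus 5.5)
The plan is to reduce the nonnegative statement to Theorem~\ref{thm:feasibility} through a bipartite double cover. The positive statement will then follow the script of Theorem~\ref{thm:finite}: lower the weights at the two endpoints of a single edge, check that the Hall condition on independent sets survives, and average the resulting weightings.

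\emph{Necessity of the Hall condition.} Let $I$ be a nonempty independent set and let $w$ be a compatible nonnegative weighting. Every edge incident to $I$ has its other endpoint in $N(I)$. Hence $|I|=\sum_{u\in I}\sum_{e\ni u}w(e)\le \sum_{v\in N(I)}\sum_{e\ni v}w(e)=|N(I)|$.

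\emph{Sufficiency of the Hall condition.} Form the bipartite system with $X$ and $Y$ both copies of $V(G)$, each vertex keeping its weight. Put $(u,v')\in S$ and $(v,u')\in S$ for every edge $uv$. The two sides have equal total mass, which is all Theorem~\ref{thm:feasibility} requires.

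If $f$ is a full flow in this system, set $w(uv)=\tfrac12\big(f(u,v')+f(v,u')\big)$. The sum of $w$ over the edges at $u$ is $\tfrac12(|u|+|u|)=|u|$, so $w$ is compatible.

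It remains to check compliance. Let $U\subseteq V$ and write $\hat N(U)$ for the full (non-excluded) neighborhood of $U$. Put $I=U\setminus\hat N(U)$, the vertices of $U$ with no neighbor in $U$; this set is independent.
\begin{itemize}
\item No vertex of $N(I)$ lies in $U$, since such a vertex would be a neighbor in $U$ of some vertex of $I$.
\item Every vertex of $U\setminus I$ has a neighbor in $U$, so it lies in $\hat N(U)$.
\end{itemize}
Thus $\hat N(U)\supseteq (U\setminus I)\sqcup N(I)$, a disjoint union. If $I$ is nonempty, the hypothesis gives $|\hat N(U)|\ge |U|-|I|+|N(I)|\ge |U|$. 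If $I$ is empty, $\hat N(U)\supseteq U$ directly. Theorem~\ref{thm:feasibility} then supplies the full flow $f$.

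\emph{Positive weighting.} Assume in addition that $|N(I)|=|I|$ implies $N(N(I))=I$. Fix an edge $e=uv$, choose a small $\eps>0$, and lower both $|u|$ and $|v|$ by $\eps$. We must check that every nonempty independent $I$ still satisfies the Hall condition in the new weights.
\begin{itemize}
\item If $u\in I$, then $v\in N(I)$, so $|I|$ and $|N(I)|$ each drop by $\eps$ and the slack is unchanged. The case $v\in I$ is symmetric.
\item If neither endpoint is in $I$ but one of them, say $v$, lies in $N(I)$, the slack drops by $\eps$ or $2\eps$.
\end{itemize}
In the second case the original slack was positive. Indeed, $u\notin I$ and $u$ is adjacent to $v\in N(I)$. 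So either $u\in N(I)$, forcing $N(I)$ to be non-independent and $N(N(I))\ni$ a vertex not in $I$, or $u\in N(N(I))\setminus I$. Either way $N(N(I))\neq I$, so $|N(I)|>|I|$ by the hypothesis.

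There are finitely many $I$, so we may take $\eps$ below the least such positive slack and below $\min(|u|,|v|)$. Every lowered weight stays positive, and the first part gives a compatible nonnegative weighting $w_e$ for the reduced weights. Adding $\eps$ to $w_e(e)$ restores compatibility with the original weights and makes $e$ positive.

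Finally, $\frac1{|E|}\sum_e w_e$ is compatible and positive on every edge. The step I expect to need the most care is the second case of the slack check, in particular the possibility that $u$ itself lies in $N(I)$; the argument above shows the exclusion condition $N(N(I))=I$ covers it.
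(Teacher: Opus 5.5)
Your reduction of the nonnegative case to Theorem~\ref{thm:feasibility} through the bipartite double cover is correct. It is a different route from the paper's, which inducts on $|V(G)|$ by splitting off $I\cup N(I)$ for a tight independent set $I$; your choice $I=U\setminus\hat N(U)$ handles compliance cleanly. The gap is in the positive part, in the subcase where both endpoints $u,v$ of $e$ lie in $N(I)$. You claim this forces $N(N(I))$ to contain a vertex outside $I$. But $N$ is the \emph{excluded} neighborhood: $u$ and $v$ belong to $N(I)$, so they are not in $N(N(I))$, and the edge $uv$ contributes nothing to $N(N(I))$. The hypothesis $N(N(I))=I$ only sees edges leaving $N(I)$, so it cannot detect that $N(I)$ is non-independent.

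This cannot be patched, because the second part of the statement is false as written. Take a triangle on $a,u,v$ with $|a|=2$ and $|u|=|v|=1$. The nonempty independent sets are the singletons. $\{u\}$ and $\{v\}$ have slack $2$, while $\{a\}$ is tight with $N(N(\{a\}))=N(\{u,v\})=\{a\}$, so both hypotheses hold. Yet compatibility gives $w(au)+w(av)=2$, $w(au)+w(uv)=1$ and $w(av)+w(uv)=1$, which forces $w(uv)=0$. In your scheme, lowering $|u|$ and $|v|$ by $\eps$ drives the slack of $\{a\}$ to $-2\eps$. (The paper's one-line appeal to the proof of Theorem~\ref{thm:finite} has the same hole.)

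In general, when $|N(I)|=|I|$ your own necessity computation becomes an equality. That forces zero weight on every edge with both ends in $N(I)$, as well as on edges from $N(I)$ to $V\setminus(I\cup N(I))$. The right extra hypothesis is that $|N(I)|=|I|$ implies $\hat N(N(I))=I$, i.e., $N(I)$ is independent and $N(N(I))=I$. Under that hypothesis your troublesome subcase has positive slack at once, since $u\in\hat N(N(I))\setminus I$. The rest of your argument then goes through unchanged.
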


\begin{proof} Necessity in each case is evident; we proceed to sufficiency for the
nonnegative edge weights, by induction on the cardinality $n$ of $V(G)$.
Defining slack $s(I) := |N(I)|-|I|$ as before, let $s$ be the minimum of $s(I)$ over all
nonempty proper independent $I \subset V(G)$.  If $s=0$, let $I$ be a
witness to that.  Then $I \cup N(I)$ induces a graph with compatible nonnegative
edge-weights, by induction, and so does $V(G) \setminus (I \cup N(I))$ (since an
independent $J$ in that subgraph with $|N(J)| < |J|$ would cause $I \cup J$ to fail
compliance).  Combining the edge-weightings provides one for all of $G$.

If $s > 0$, we choose an $I$ with minimum $s(I)$ and reduce vertex weights in $N(I)$
until $s(I)=0$, then proceed as above. (See \cite{DK} for another proof of the first
part of the theorem.)

Sufficiency for the second part of the theorem follows from the argument in the proof
of Theorem~\ref{thm:finite}.  \end{proof}

\medskip

\begin{theorem} Let $D$ be a finite directed network with minimum cuts (and thus
maximum flows) of value $c$.  Then $D$ has a {\em nowhere-zero} flow of value $c$
if and only if every arc belongs to a path from source to sink, and no arc crosses a
minimum cut in the reverse direction.
\end{theorem}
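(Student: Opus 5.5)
Both directions follow the template already used for Theorem~\ref{thm:finite}: for each arc $a$ we produce a maximum flow that is positive on $a$, and then average.

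\emph{Necessity.} Suppose $\phi$ is a nowhere-zero flow of value $c$. Decompose $\phi$ into positive flows along source--sink paths plus positive flows around cycles. For the path condition, take an arc $a=(u,v)$ with $\phi(a)>0$. Conservation lets us follow arcs carrying positive flow backward from $u$ and forward from $v$. Each such walk either reaches $s$ (resp.\ $t$) or closes a cycle. So in general we only get that every arc lies on a path or on a cycle of positive flow. I therefore read the hypothesis ``belongs to a path from source to sink'' as ``lies on a directed $s$--$t$ walk or a cycle'', i.e.\ in the same component as $s$ and $t$ in this sense. I would treat reconciling this with the exact phrasing as a minor point to check, or assume $D$ is acyclic, in which case the path decomposition gives the condition directly.

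For the cut condition, let $(A,B)$ be a minimum cut with $s\in A$ and $t\in B$. The net flow across it equals $c$, which is the forward capacity. Hence every forward arc is saturated and every reverse arc carries zero flow. A reverse arc would therefore force $\phi$ to vanish somewhere.

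\emph{Sufficiency.} Fix an arc $a=(u,v)$. Choose a path $P$ from $s$ to $t$ through $a$. Let $\delta$ be the minimum, over all cuts, of $\mathrm{cap}(\text{cut})-c$ taken over those cuts that $P$ crosses more than once forward. Such cuts are not minimum. The reason is this: a path crossing a cut forward $k$ times must cross it backward $k-1$ times, and by hypothesis no arc crosses a minimum cut in reverse. So $\delta>0$, and we may take $0<\eps<\delta/|P|$.

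Now subtract $\eps$ from the capacity of each arc of $P$. Every cut decreases in capacity by at most $\eps\cdot(\#\text{forward crossings of }P)$. A minimum cut is crossed exactly once, so it drops to $c-\eps$. Any other cut drops by less than $\delta$, so it stays above $c-\eps$. By max-flow min-cut the reduced network has a flow of value $c-\eps$. Adding $\eps$ back along $P$ gives a flow $\phi_a$ of value $c$ with $\phi_a(a)\ge\eps>0$. The averaged flow $\frac1{|E|}\sum_a\phi_a$ is then a flow of value $c$ that is positive on every arc.

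The step I expect to be the main obstacle is the claim that $P$ crosses each minimum cut exactly once. A simple path may still zigzag across a cut, and that is ruled out precisely by the no-reverse-arc hypothesis. A second subtle point is that capacities must also be at least $\eps$ on $P$, which is harmless for $\eps$ small.
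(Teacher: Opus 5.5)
Your sufficiency argument is essentially the paper's: shave $\eps$ along an $s$--$t$ path through the arc, use the no-reverse-arc hypothesis to see that each minimum cut is crossed forward exactly once (so drops to exactly $c-\eps$) while every other cut stays above $c-\eps$, invoke max-flow min-cut, add $\eps P$ back, and average. The only cosmetic difference is that you fix one simple path per arc, whereas the paper allows walks (hence its repetition bound $k$) and averages over all paths.

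Your hesitation about necessity is a genuine point against the statement, not a gap in your proof. The paper's one-line claim that an arc off every $s\to t$ route is unusable ignores circulations. For example, take an arc $s\to t$ together with a disjoint directed $2$-cycle, all capacities $1$: no minimum cut is crossed in reverse and there is a nowhere-zero flow of value $1$, yet the cycle arcs lie on no $s$--$t$ path. So the path condition does need either your acyclicity assumption or your ``path or cycle'' reading. Under the latter reading, arcs lying only on cycles are handled by the same shaving trick applied to the cycle. By hypothesis the cycle crosses no minimum cut, so the minimum cut value stays $c$ and an $\eps$-circulation can be added back.
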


\begin{proof} Let $s$ be the source and $t$ the sink.  The conditions are clearly
necessary, since an arc that's not on an $s \to t$ route is unusable in any flow,
and one that crosses a minimum cut backwards can't carry any mass in a full flow.
To show sufficiency, we suppose the conditions hold, and employ a variation of
the argument in the proof of Theorem~\ref{thm:finite}.

Let $P$ be a path in $D$ from $s$ to $t$, so that
$$
P = s \to P_1 \to u_1 \to v_1 \to P_2 \to u_2 \to v_2 \to P_3 \to \cdots \to P_k \to t
$$
where the arcs $(u_i,v_i)$ are those that belong to some minimum cut.  Because no arcs
of $D$ cross a minimum cut backwards, each $(u_i,v_i)$ belongs to a different minimum cut.
Let $k$ be at least the number of times any arc appears in $P$, and also at least the
number of times any cut $C$ is crossed in the forward direction by $P$.  Pick $\eps > 0$
such that $k\eps$ is less than any arc capacity, and such that every nonminimum cut has
value greater than $c + k\eps$.

Now shave off $\eps$ from the capacity of every arc in $P$, to get a new network $D'$
whose minimum cut value is only $c-\eps$.  Let $f'_P$ be a full flow (of mass $c-\eps$)
on $D'$, and add back $\eps P$ to get a full $c$-flow $f_P$ on the original network $D$.
The average of $f_P$ over all paths $P$ from $s$ to $t$ is a full flow on $D$,
and since every arc is on some $P$, we have the nowhere-zero flow as required. \end{proof}

\subsection{The infinite, capacitated, discrete case}

The sufficiency of expansiveness in the (countably) infinite discrete case will follow,
as in the feasibility case, from a later theorem about permutons. Thus, we can't really
justify presenting a proof of this case here.  As a compromise, since flow problems often
come with edge-capacities---and, indeed, capacity arises on its own in the continuous case---we
instead prove a capacitated version of the infinite discrete case.

In the previous section, an edge $e = (x,y)$ was allowed to carry as much flow as we wanted;
thus, anything up to $\min(|x|,|y|)$.  Suppose we introduce capacities $c(e) = c(x,y)$ and
permit only flows $\phi$ for which $\phi(e) \le c(e)$ for each edge $e$.

Clearly, Hall's condition must be amended; if some edges from $U \subset X$ have
low capacity, the flow from $U$ to $N(U)$ may be limited to much less than $|N(U)|$.
In particular, if the sum of the capacities of the edges from $U$ to $y \in N(U)$ is
less than $|y|$, then effectively $|y|$ is replaced by that sum when we add weights to
compute what was the $|N(U)|$ term in the Hall condition.

Accordingly, let us define $|y|_U$ to be the minimum of $|y|$ and the sum $c(U,y) :=
\sum_{x \in U} c(x,y)$ of the capacities of the edges from $U$ to $y$,
and $|N(U)|_U$ to be $\sum\{|y|_U:~y \in N(U)\}$.  Then, in order
to have a flow, every $U \subset X$ must satisfy the ``capped" Hall condition
$|N(U)|_U \ge |U|$.  Sufficiency follows easily from the max-flow min-cut theorem.

\begin{proposition}\label{capped-hall} Let $G = \langle X,Y,E \rangle$ be a (possibly
infinite) node-weighted, edge-capped bipartite graph with $|X|=|Y|=1$.  Then $G$ supports
a unit flow from $X$ to $Y$ if and only if for every $U \subseteq X$, $|N(U)|_U \ge |U|$.
\end{proposition}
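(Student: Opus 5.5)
Necessity is immediate. If $\phi$ is a unit flow, then for any $U \subseteq X$ all mass leaving $U$ goes to $N(U)$. The mass arriving at a given $y \in N(U)$ from $U$ is at most $|y|$, because $\phi$ respects the weight of $y$. It is also at most $c(U,y)$, because $\phi$ respects the capacities. Hence it is at most $|y|_U$, and summing over $y \in N(U)$ gives $|U| \le |N(U)|_U$.

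For sufficiency I first treat finite $G$ by max-flow min-cut, as in the proof of Theorem~\ref{thm:feasibility}. I build the network $D$ with source $s$, sink $t$, arcs $s\to x$ of capacity $|x|$, arcs $y \to t$ of capacity $|y|$, and arcs $x \to y$ of capacity $c(x,y)$ for $(x,y)\in E$. Take any cut, and let $U$ be the set of $X$-nodes on the source side. Arcs $s\to x$ with $x \notin U$ are cut, contributing $|X\setminus U| = 1-|U|$. For each $y \in N(U)$, either $y$ is on the sink side, so all arcs from $U$ to $y$ are cut and contribute $c(U,y)$, or $y$ is on the source side, so $y\to t$ is cut and contributes $|y|$. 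Either way the contribution is at least $|y|_U$. The cut capacity is therefore at least $1-|U|+|N(U)|_U \ge 1$ under the capped Hall condition. The maximum flow thus has value $1$, which is a unit flow.

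For the countably infinite case I pass to a limit. Enumerate $X=\{x_1,x_2,\dots\}$ and $Y$ likewise, and let $G_n$ be the finite subgraph on $X_n=\{x_1,\dots,x_n\}$ and $Y_n=\{y_1,\dots,y_n\}$. The plan is to obtain flows $\phi_n$ on $G_n$ whose mass tends to $1$, and then extract a pointwise convergent subsequence by a diagonal argument. This is possible since each edge value lies in $[0,\min(|x|,|y|,c(e))]$. The limit $\phi$ satisfies all capacity constraints and has outflow at most $|x|$ at each $x$ and inflow at most $|y|$ at each $y$. Equality at each $x$ is the delicate part.

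To get the $\phi_n$, I apply the finite min-cut bound to $G_n$ with weights restricted to $X_n, Y_n$. The Hall condition no longer holds exactly, since $N(U)$ in $G$ may include vertices outside $Y_n$. However, the same cut computation shows the max flow in $G_n$ is at least
\[
\min_{U \subseteq X_n}\Big(1-|U|+\sum_{y\in N(U)\cap Y_n}|y|_U\Big) \ge 1-|Y\setminus Y_n| - \textstyle\sum_{x \notin X_n}|x| ,
\]
which tends to $1$. Fatou-type control then gives total limit mass $1$, as follows. The outflow from each fixed $x$ is a sum over $y$ dominated by $\min(c(x,y),|y|)$, and $\sum_y |y| = 1$. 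Dominated convergence therefore lets me pass the limit through each vertex sum. The mass of $\phi_n$ on $X_m$ is at least (mass of $\phi_n$) $-\sum_{x\notin X_m}|x|$, so the limit mass is $1$. Because every vertex is individually bounded by its weight and the totals equal $1 = |X| = |Y|$, every vertex constraint is tight, and $\phi$ is a unit flow.

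The main obstacle I expect is this limiting step: justifying the interchange of limit and infinite sums at each vertex, and seeing that no mass escapes to infinity. Both are handled by the domination by $|y|$ together with the summability of the weights.
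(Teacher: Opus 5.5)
Your proof is correct, and its core is the paper's argument: the same augmented network, and the same observation that each $y \in N(U)$ contributes at least $|y|_U$ to any cut, either through the arcs from $U$ or through the arc $y \to t$. You depart from the paper in the infinite case. The paper applies max-flow min-cut directly to the possibly infinite network (``if there is no unit flow \dots\ there must be a cut of capacity less than 1''). That tacitly assumes a max-flow min-cut theorem for countably infinite networks. You instead use only the finite theorem on the truncations $G_n$ and then pass to a limit. The diagonal subsequence, dominated convergence at each vertex (dominated by $y \mapsto |y|$), and the tail bound over $X \setminus X_m$ together ensure that no mass escapes. Tightness at every vertex then follows from $\sum_x |x| = \sum_y |y| = 1$. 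This costs a paragraph, but it makes the proposition self-contained, and it is exactly what is needed to justify the paper's step in the infinite case.

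One small slip: in $G_n$ the cut source arcs contribute $|X_n| - |U|$, not $1 - |U|$. So the middle expression in your display should have $|X_n|$ in place of $1$; as written it can exceed the max flow in $G_n$, which is at most $|X_n|$. Your final bound $1 - |Y \setminus Y_n| - \sum_{x \notin X_n}|x|$ is still right. This is because $|X_n| = 1 - \sum_{x \notin X_n}|x|$ and $\sum_{y \in N(U) \cap Y_n}|y|_U \ge |N(U)|_U - |Y \setminus Y_n| \ge |U| - |Y \setminus Y_n|$.
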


\begin{proof} As in the proof of Theorem~\ref{thm:feasibility}, we augment $G$ with a source $s$
connected to each $x \in X$ by an edge of capacity $|x|$, and a sink $t$ connected to each
$y \in Y$ by an edge of capacity $|y|$.  Edges between $X$ and $Y$ are given the capacities
assigned to them in $G$.

If there is no unit flow from $s$ to $t$ in this network, there must be a cut of capacity
less than 1; let $U$ contain the elements of $X$ on the source side of the cut.  Then
for every $y \in N(U)$, either the edge from $U$ to $y$ or the edge from $y$ to $t$ must
be cut, thus each such $y$ is responsible for contributing at least $|y|_U$ to the magnitude
of the cut.  Since the edges from $s$ to $X \setminus U$, whose total capacity is $1-|U|$,
must also be cut, we have $1 - |U| + |N(U)|_U < 1$, thus $|N(U)|_U - |U| < 0$ and there is
a violation of the capped Hall condition.
\end{proof}

If we want a nowhere-zero 1-flow, we must amend the expansiveness condition as well.
Clearly we need at least the capped Hall condition and the previous
equality condition, namely that $|N(U)|=|U|$ implies $|N(N(U))| = |U|$.
But we need not forbid $|N(U)|_U = |U| < |N(N(U))|$; we can allow edges from $X \setminus U$
to $y \in Y$ provided $|y|_U < |y|$.  Accordingly, we say that a set $U \subset X$ is
{\em expansive} in the capacitated case if either $|N(U)|_U > |U|$, or $|N(U)|_U = |U|$
and no edge may connect $X \setminus U$ to $y \in N(U)$ unless $|y|_U < |y|$.  We call $G$
expansive if every $U \subset X$ is expansive, and then we have:

\begin{theorem}\label{thm:capped} A finite, node-weighted, edge-capped bipartite
graph $G$, with positive weights summing to the same value on each bipart, has a full
nowhere-zero flow iff it is expansive.
\end{theorem}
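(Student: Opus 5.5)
We follow the script of Theorem~\ref{thm:finite}, with Proposition~\ref{capped-hall} playing the role of Theorem~\ref{thm:feasibility}. Necessity comes first. Suppose a full nowhere-zero flow $\phi$ exists and $U\subseteq X$. All of $|U|$ flows into $N(U)$, and the flow into each $y\in N(U)$ from $U$ is at most $|y|_U$. Hence $|N(U)|_U\ge|U|$. If equality holds, every $y\in N(U)$ must receive exactly $|y|_U$ from $U$. When $|y|_U=|y|$, all of $y$'s weight is used up by $U$, so no edge from $X\setminus U$ to $y$ can carry positive flow. Nowhere-zeroness then forbids any such edge, which is exactly expansiveness.

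For sufficiency, fix an edge $e=(x,y)$ and aim to build a full flow $\phi_e$ with $\phi_e(e)>0$. Averaging $\phi_e$ over the finitely many edges then gives a full nowhere-zero flow, and capacities are respected because the constraints are convex. To build $\phi_e$, pick $\eps>0$ small and form the reduced graph $G'$:
\begin{itemize}
\item lower $|x|$ and $|y|$ by $\eps$;
\item lower the capacity $c(e)$ by $\eps$;
\item leave everything else unchanged.
\end{itemize}
If $G'$ satisfies the capped Hall condition for total mass $1-\eps$, Proposition~\ref{capped-hall} (whose proof does not need total mass $1$) supplies a flow of mass $1-\eps$. Adding $\eps$ on $e$ then yields $\phi_e$ in $G$, and $\phi_e(e)\ge\eps$. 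We need $\eps<\min(|x|,|y|,c(e))$; if $c(e)=0$ the edge is effectively absent and can be discarded at the outset.

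The main step is verifying the capped Hall condition in $G'$. There are finitely many sets $U$, so it suffices to show that each $U$ with capped slack $s(U):=|N(U)|_U-|U|$ equal to $0$ satisfies the condition in $G'$. Every set with $s(U)>0$ then survives once $\eps$ is below the minimum positive slack, since a single change can alter each capped sum by at most $2\eps$ (taking $\eps<\tfrac12\min s$). There are two cases for a tight $U$.
\begin{itemize}
\item If $x\in U$, then $|U|$ drops by $\eps$. The capped sum $|N(U)|_U$ drops by at most $\eps$ in total, because the three changes all concern the single neighbor $y$, and $|y|'_U=\min(|y|-\eps,\,c(U,y)-\eps)\ge|y|_U-\eps$.
\item If $x\notin U$, then $|U|$ is unchanged, and only $|y|$ could decrease, since $c(U,y)$ does not involve $e$. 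This matters only when $y\in N(U)$, in which case $e$ connects $X\setminus U$ to $y\in N(U)$. Expansiveness then forces $|y|_U=c(U,y)<|y|$, so lowering $|y|$ by $\eps<|y|-c(U,y)$ leaves $|y|_U$ unchanged.
\end{itemize}
So we also require $\eps$ smaller than all positive gaps $|y|-c(U,y)$.

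I expect this case analysis for tight sets to be the main obstacle. It is exactly where the capped form of expansiveness is used, and care is needed that the $\min$ in $|y|_U$ moves by at most $\eps$ under simultaneous changes. Since everything is finite, all the required positive minima exist and a valid $\eps$ can be chosen.
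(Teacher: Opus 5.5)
Your proposal is correct and follows the paper's proof: you shave $\eps$ from $|x|$, $|y|$ and $c(e)$, observe that only sets $U$ with $x\notin U$ and $y\in N(U)$ (the paper's ``vulnerable'' sets) are at risk, use capped expansiveness to protect the tight ones, invoke Proposition~\ref{capped-hall}, and average. The only difference is in justifying a uniform $\eps$: the paper takes the infimum of $r(U)=|N(U)|_U-|U|+(|y|-|y|_U)$ over vulnerable $U$ and shows it is positive by a submodularity/limit argument suited to infinite graphs, whereas you get the positive minima directly from finiteness, which suffices for the finite statement.
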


\begin{proof} That the (capped) expansiveness condition is necessary follows from previous
arguments, so let us suppose that $G$ is expansive with the intent of finding a full,
nowhere-zero flow that obeys the edge-capacity constraints.

First let us observe as before that if for every edge $e$ there is a full flow $\phi_e$ on
$G$ with $\phi_e(e) > 0$, then there is indeed a full nowhere-zero flow;
$\phi := \sum_{k=1}^{\infty} 2^{-k} \phi_{e(k)}$ will do the trick, for any numbering
$E = \{e(1), e(2), \dots \}$ of the edges of $G$.  To get $\phi_e$, we would like to proceed
as before, that is, find an $\eps > 0$ so that $G - \eps e$ satisfies the Hall condition,
then increment the weight of $e$ in a flow on $G - \eps e$ to get $\phi_e$.

Subtracting $\eps e$ shaves $\eps$ not only from the weights $|x|$ and $|y|$ of $e$'s
endpoints, but also from $e$'s capacity $c(e)$.  The loss of capacity cannot directly reduce
a set's slack since it comes with a like loss from $|x|$.  But a set $U \subset X$
which does not contain $x$, but whose neighborhood contains $y$, might now flunk the
capped Hall condition as a result of the reduction of $|y|$.  We say that $U$ is
{\em vulnerable} if $x \not\in U$ but $y \in N(U)$.

If $U$ is vulnerable and $|y|_U = |y|$, $|N(U)|_U$ will drop by $\eps$; otherwise, only
by $\max(0, \eps-(|y|-|y|_U))$.  From this observation comes a new slack definition that
depends on $y$:
$$
r(U) = |N(U)|_U - |U| + (|y|-|y|_U) = |y| - |U| + \sum_{v \in N(U) \setminus \{y\}} |v|_U.
$$
The point is that $G - \eps e$ satisfies the capped Hall condition iff $\eps \le r(U)$
for every $U \subset X$.  Thus, the theorem is proved if we can show that there
is no sequence $U_1, U_2, \dots$ of vulnerable sets with $r(U_i) \to 0$.

It is easily checked that for any $v$, $|v|_U$ is submodular as a function of $U$,
hence $r$ is a submodular function, and in particular $r(U \cup V) \le r(U) + r(V)$.
We may assume that $r(U_i) < 2^{-i}$, hence $r(W_n) < 2^{-n}$ where $W_n =
\bigcup_{i>n}U_i$.  Let $W = \bigcap_n W_n$.

Since $N(W) \subseteq \bigcap_n N(W_n)$, we have
$$
r(W) \le \lim_{n \to \infty} r(W_n) = 0.
$$
Since $r(W)$ cannot be negative, we deduce that $N(W) = \bigcap_n N(W_n)$ and thus
$y \in N(W)$; moreover, $|y|_W = |y|$ since $r(W_n) \to 0$ implies $|y| - |y|_{W_n} \to 0$.
Since $x \not\in W$, this means exactly that $W$ is not expansive in the capped sense,
and the proof is complete.
\end{proof}

\subsection{Continuous Marginals}

We begin by noting (reluctantly) that there are closed sets $S \subseteq X \times Y$
which are compliant and expansive but do not support a permuton.  We give three examples
below, each with a different sort of obstruction.

It is convenient to consider what happens on a square ``cell" $C = I \times J$, where
$I$ and $J$ are intervals of $X$ and $Y$, respectively.  (For our purposes it suffices
to consider small cells with rational, or even dyadic, corners.) If $C$ is ``occupied," that is, its interior
intersects $S$, then a permuton supported by $S$ must of course put positive mass on $C$.
But the amount of such mass is limited by its local marginals (on $C$) needing to be Lebesgue-bounded.

Accordingly, we define the {\em capacity} $c(C)$ of an occupied cell $C$ to be the infimum over
measurable sets $A \subseteq I$ of $|L(A)| + |I \setminus A|$, where $L(A)$ is the local
neighborhood $N(A) \cap J$ and $| \cdot |$ denotes Lebesgue measure.  Applying Theorem~\ref{thm:cont-mfmc}
to $C$, with Lebesgue bounds on the marginals, we have:

\begin{lemma}\label{lem:cell-capacity}
No permuton supported in $S$ can put mass greater than $c(C)$ on $C$.  However, there is
a distribution on $C$ of mass exactly $c(C)$ whose local marginals are Lebesgue-bounded.
\end{lemma}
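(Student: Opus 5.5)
Both halves of Lemma~\ref{lem:cell-capacity} come from Theorem~\ref{thm:cont-mfmc} applied locally to the cell $C = I\times J$. We rescale $I$ and $J$ affinely to unit intervals, which multiplies all masses by a common constant, and take $\alpha$ and $\beta$ to be Lebesgue measure restricted to $I$ and to $J$. The support set is $S_C := S\cap C$, which is closed. Its neighborhood function is exactly the local neighborhood $L(A) = N(A)\cap J$, computed with respect to $S_C$. With this setup, the quantity $r = \inf_{W} r(W)$ of Theorem~\ref{thm:cont-mfmc} is precisely $c(C) = \inf_{A\subseteq I}\bigl(|L(A)| + |I\setminus A|\bigr)$.

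For the upper bound, let $\gamma$ be a permuton supported in $S$, and let $\gamma_C$ be its restriction to $C$. The support of $\gamma_C$ lies in $S\cap C = S_C$. Its marginals are bounded by Lebesgue measure on $I$ and on $J$, because the marginals of $\gamma_C$ are dominated by those of $\gamma$, which are Lebesgue. The ``only if'' direction of Theorem~\ref{thm:cont-mfmc} then gives $\gamma(C) = \gamma_C(C) \le r = c(C)$.

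That direction is just the cut argument, and it can also be checked by hand. For any measurable $A\subseteq I$, split $C$ into $(A\times J)\cap S_C$ and $((I\setminus A)\times J)$. The first piece is contained in $I\times L(A)$, so its mass is at most the $J$-marginal bound, namely $|L(A)|$. The second piece has mass at most its $I$-marginal bound, namely $|I\setminus A|$. Taking the infimum over $A$ gives the bound.

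For the existence part, the ``if'' direction of Theorem~\ref{thm:cont-mfmc} with $m = r = c(C)$ yields a distribution supported in $S_C\subseteq C$ of mass exactly $c(C)$, with marginals bounded by Lebesgue measure on $I$ and on $J$. These are the required Lebesgue-bounded local marginals.

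The only delicate point is the boundary of $C$. Since $C$ is closed, $S_C$ is closed, as Theorem~\ref{thm:cont-mfmc} requires. If $C$ is instead read as half-open, so that adjacent cells do not share mass, we note that the boundary of $C$ has Lebesgue-null projections. Any mass on it is therefore zero, because the marginals are Lebesgue-bounded, and the estimates are unaffected. Finally, the rescaling in the setup, together with the hypothesis of equal total masses $|I| = |J|$ (square dyadic cells), is what lets us quote Theorem~\ref{thm:cont-mfmc} verbatim.
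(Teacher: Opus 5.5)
Your proposal is correct and follows the paper's own argument: the paper gets the lemma in one line by applying Theorem~\ref{thm:cont-mfmc} to the cell $C$ with Lebesgue bounds on the marginals, so that the quantity $r$ there is exactly $c(C)$. Your hand-checked cut bound, the rescaling, and the remark that Lebesgue-bounded marginals put no mass on $\partial C$ (each edge of $\partial C$ projects to a null set on one of the axes) only make explicit details the paper leaves implicit.
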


It follows that if there is an occupied cell of zero capacity, $S$ cannot be the support
of a permuton.  Fig.~\ref{fig:horiz} boasts a cell that intersects $S$ in a closed horizontal
line segment; since the local neighborhood of $I$ itself is a single point, this cell has
zero capacity.  But it is easily checked that $S$ is compliant and expansive.

\begin{figure}[ht!]
        \centering
        \includegraphics[width=.8\linewidth,trim=0cm 4cm 0cm 1cm, clip=true]{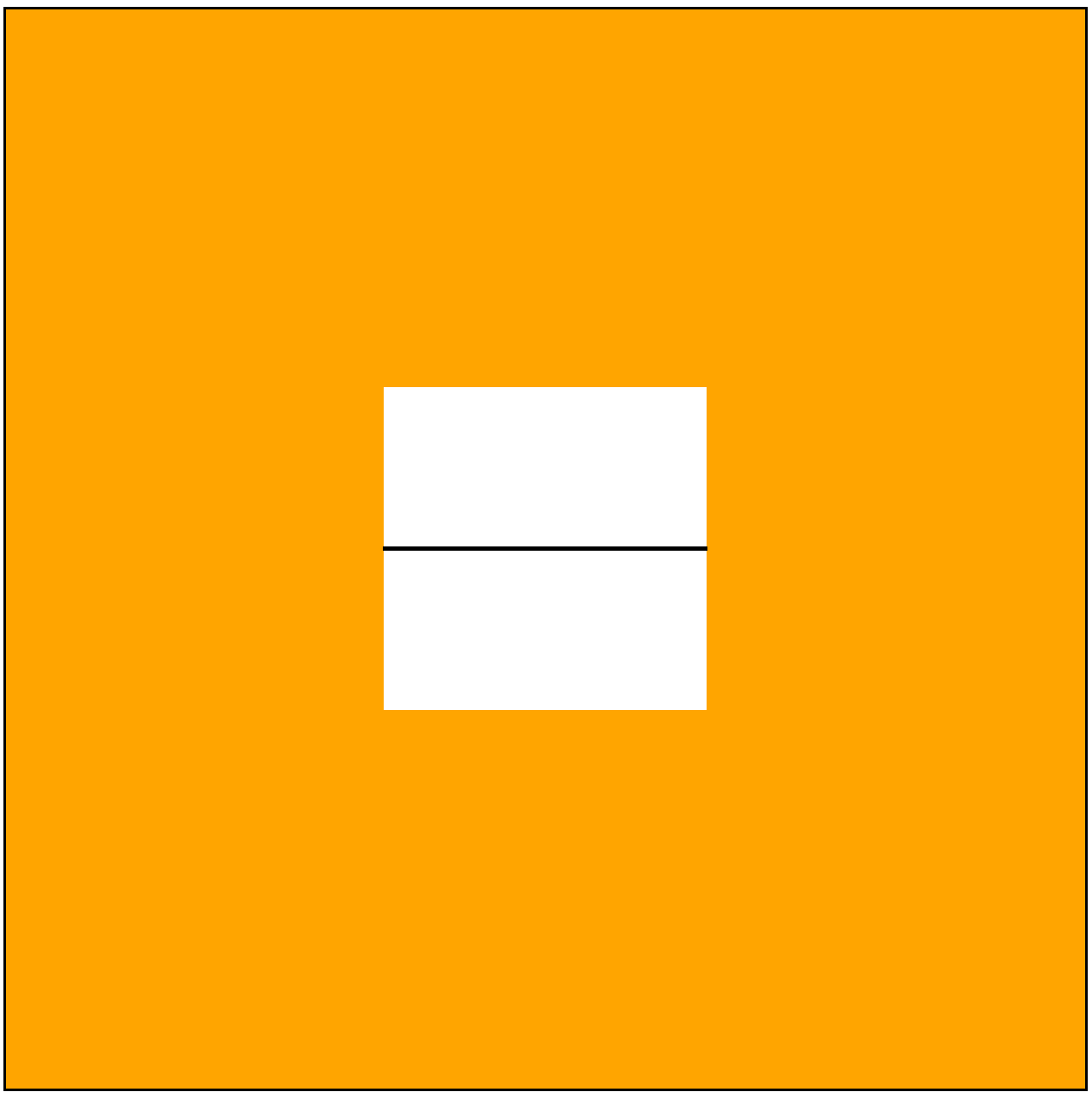}
        \caption{A compliant and expansive set that does not support a permuton.}
        \label{fig:horiz}
\end{figure}

Even if $|L(I)| = |J|$ and $|L(J)| = |I|$, as would be the case if $S \cap C$ is the graph
of a continuous, monotone, one-to-one function from $I$ to $J$ (as suggested by Mateusz Kwa\'snicki
\cite{Kw} in response to the authors' MathOverflow query), we may have $c(C)=0$. An example: Define the $\tau$-measure
of an interval to be the probability that a random real in [0,1], generated by i.i.d.\ selections of
its binary expansion coordinates with $\Pr(1) = 1/3$, lies in the interval.  Take $C = [0,1]^2$ and
take $S$ to be the graph of the cumulative distribution of the singular measure
$\tau$ thus generated.  Let $W$ be the set of all reals in the unit interval whose binary
expansions have limiting density 1/2 of 1's.  Then by the Law of Large Numbers, the Lebesgue
measure of $W$ is 1 but its $\tau$-measure, which is equal to $|L(W)|$, is zero.

Even supposing all occupied cells have positive capacity, we still need the continuous analogs
of the conditions of Theorem~\ref{thm:capped}.  That even they are not enough can be seen
from the following example, suggested by ChatGPT 5.6 and illustrated in Fig.~\ref{fig:farey}.

\begin{figure}[ht!]
        \centering
        \includegraphics[width=.8\linewidth,trim=0cm 5cm 0cm 0cm, clip=true]{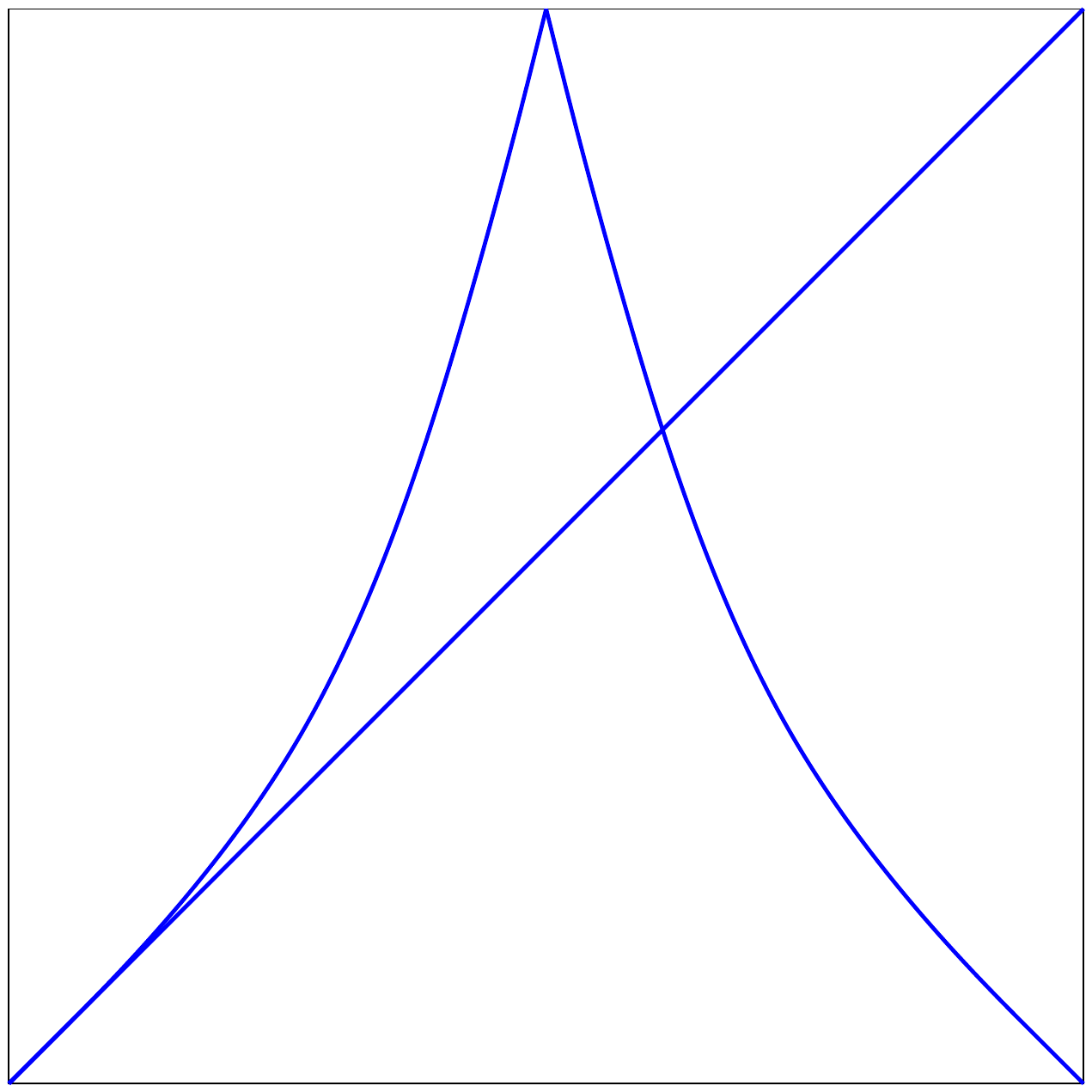}
        \caption{A set that satisfies the continuous analogs of the conditions of Theorem~\ref{thm:capped}
but does not support a permuton.}
        \label{fig:farey}
\end{figure}

The curves in the figure are the graphs of $f(x) = x/(1{-}x)$ on $[0,\frac12]$ and $g(x) = (1{-}x)/x$
on $[\frac12,1]$, but all that matters is that they are strictly convex and asymptotic with absolute
slope 1 at the corners.  That this $S$ is compliant (and capped-compliant) is obvious since the
``identity" permuton (uniform on the main diagonal, 0 elsewhere) is supported in $S$.  But any
attempt to find a permuton supported in $S$ that has mass on one of the curves is doomed; such
a mass will contribute to the $X$-marginal at a region nearer the center than it does to the $Y$-marginal,
but of course any remaining distribution on the main diagonal contributes identically to both marginals.

\medskip

The next theorem provides a characterization (of sorts), which, though neither deep nor satisfying,
will prove useful in the succeeding section.

An occupied cell $C$ will be said to be {\em shavable} if there is a positive assignment of mass on $C$
whose marginals, when ``shaved off" of the uniform marginals of the unit square, leave a compliant
system.  More precisely: $C$ is shavable if there is a nontrivial distribution $\rho$ on $S \cap C$,
with sub-Lebesgue $X$- and $Y$-marginals $\alpha$ and $\beta$, such that the system consisting of $S$
on the unit square with marginals $\lambda - \alpha$ and $\lambda - \beta$ is compliant.

\begin{theorem}\label{thm:cont} Let $S$ be a closed set on the unit square $X \times Y$.  Then $S$ is the
support of a permuton iff $S$ is compliant and every (small, rational) occupied cell is shavable.
\end{theorem}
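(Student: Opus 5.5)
The forward direction is immediate, and the converse follows the averaging template of Theorem~\ref{thm:finite}, with Corollary~\ref{cor:shaved} playing the role of Theorem~\ref{thm:feasibility}.

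\emph{Necessity.} Suppose $\gamma$ is a permuton with support exactly $S$. Then $S$ is compliant by the Hall argument already given. Let $C$ be an occupied rational cell, so the interior of $C$ meets $S$. Since the support of $\gamma$ is $S$, we have $\gamma(C)>0$. Take $\rho := \gamma|_C$, the restriction of $\gamma$ to $C$. Its marginals $\alpha,\beta$ are bounded by those of $\gamma$, hence sub-Lebesgue. The remainder $\gamma-\rho$ is a measure supported in $S$ with marginals exactly $\lambda-\alpha$ and $\lambda-\beta$. By the trivial direction of Theorem~\ref{thm:feasibility} (the Hall computation works verbatim for finite measures of equal mass), the shaved system is compliant. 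So $C$ is shavable.

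\emph{Sufficiency.} Assume $S$ is compliant and every small rational occupied cell is shavable. The plan has three steps.
\begin{enumerate}
\item Enumerate the countably many occupied rational (say dyadic) cells as $C_1,C_2,\dots$.
\item For each $C_k$, choose a witness $\rho_k$ on $S\cap C_k$ with marginals $\alpha_k,\beta_k$, and apply Corollary~\ref{cor:shaved} to the compliant system with marginals $\lambda-\alpha_k$ and $\lambda-\beta_k$. These have equal total mass $1-\rho_k(C_k)$, and the corollary needs only equal total mass, not probability measures. This produces a measure $\sigma_k$ supported in $S$ with those marginals. Then $\gamma_k := \rho_k+\sigma_k$ is a permuton supported in $S$ with $\gamma_k(C_k)>0$.
\item Set $\gamma := \sum_k 2^{-k}\gamma_k$. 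This is a convex combination of permutons, so it is a permuton, and it is supported in $S$ because $S$ is closed and each $\gamma_k$ lives on $S$.
\end{enumerate}

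It remains to check that the support of $\gamma$ is all of $S$, and this is the only point needing care. Let $P\in S$ and let $Q$ be any open neighborhood of $P$. Choose a small dyadic cell $C$ with $C\subseteq Q$ whose interior contains $P$, so $C$ is occupied. Then $C=C_k$ for some $k$, and $\gamma(Q)\ge 2^{-k}\gamma_k(C_k)>0$. The subtlety is boundary mass: $\rho_k$ might sit on the boundary of $C_k$, but that does no harm because $C_k\subseteq Q$.

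A second subtlety is points $P\in S$ lying on dyadic grid lines, or on the edge of the square, where no dyadic cell has $P$ in its interior. For these I would use cells with arbitrary rational corners instead, or shifted dyadic grids; this is presumably why the statement says ``small, rational''. I would check that the definition of occupied (interior meets $S$) is matched so that every point of $S$ lies in the interior of arbitrarily small occupied rational cells relative to the square. I expect this bookkeeping, together with confirming that Corollary~\ref{cor:shaved} applies to the non-probability marginals $\lambda-\alpha_k$ and $\lambda-\beta_k$ (which are Lebesgue-bounded, as the corollary requires), to be the main, though mild, obstacle.
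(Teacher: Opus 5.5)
Your proposal is correct and is essentially the paper's proof: necessity by restricting $\gamma$ to $C$ and applying the Hall argument to $\gamma-\rho$, and sufficiency by shaving $\rho$, invoking Corollary~\ref{cor:shaved} for the remainder, adding $\rho$ back, and averaging the resulting permutons as $\sum 2^{-k}\gamma_k$. The only difference is bookkeeping: you enumerate occupied rational cells, while the paper enumerates rational points of $S$ with radii $2^{-i}$; your version is, if anything, cleaner, since $S$ need not contain any rational points.
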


\begin{proof} If $\gamma$ is a permuton supported by $S$, and $C$ is an occupied cell, then taking
$\rho$ to be $\gamma$ restricted to $S \cap C$ and letting $\gamma' = \gamma - \rho$ leaves a
distribution supported in $S$ with the appropriate shaved marginals, thus a compliant system.

Conversely, let $P$ be any rational point of $S$ and $\eps > 0$.  Let $C$ be a rational cell of
size less than $\eps \times \eps$ that contains $P$ in its interior, and choose a nontrivial distribution
$\rho$ on $S \cap C$ in accordance with the shavability of $C$.  Compliance of the system with $\rho$'s
marginals shaved gives us (by Corollary~\ref{cor:shaved}) a subpermuton $\gamma'$ supported in $S$.  Adding
$\rho$ to $\gamma'$ gives a permuton $\gamma$ still supported in $S$, but with positive mass on $C$.

The usual argument finishes the proof:  Repeating for every rational $P_i \in S$ and $\eps_i = 2^{-i}$
gives a countable list $\gamma_1, \gamma_2, \dots$ of permutons.  Their weighted mean $\sum 2^{-i}\gamma_i$
is supported in $S$ and has positive mass arbitrarily close to every rational point in the closed
set $S$, thus is supported exactly by $S$. \end{proof}

Notice that shavability is (strictly) stronger than expansiveness, since if $A \subset X$ is Hall-tight,
an occupied cell in the forbidden region $N(N(A)) \setminus A$ will not be shavable.  Shavability
can be thought of as expansiveness for both tight and {\em asymptotically} tight sets.

\subsection{The Regular Case}

Supports of permutons can be pretty wild (viz., the ``Brownian permutons"
of \cite{BBFGP,Bo}).  One useful way to generate permutons is by rotating a probability
distribution $\pi$ on $X = [0,1]$ to form what we call (by analogy with circulant matrices)
the {\em circulon} $\gamma_\pi$, as follows.  Let $\gamma_\pi([0,x] \times [0,y]) =
\int_0^y \pi([-t,x-t]) dt$ where the endpoints of $[-t,x-t]$ are understood modulo 1,
so that $[-t,x-t] = [1-t,1] \cup [0,x-t]$ for $x \ge t$ and $[-t,x-t] = [1-t,1-t+x]$ otherwise.

The support of $\gamma_\pi$ is the union of southwest-to-northeast diagonal lines.
For example, if $\kappa$ is the Cantor distribution on [0,1], a singular probability
distribution with no point masses, $\gamma_\kappa$ is a singular permuton whose
support is the union of continuum many diagonals, each of which has zero mass.
On the other hand, suppose we enumerate the rationals $\mathbb{Q} \cap [0,1] =
\{r_1,r_2,\dots\}$ in the unit interval and let $D = [0,1] \setminus
\bigcup_i (r_i,r_i + 2^{-i-1})$.  Then $D$ is a closed set of positive measure
$d < 1$ with no interior.  Let $\pi(A) = \frac1d \lambda(A \cap D)$; then $\pi$
is a nonsingular probability measure on $[0,1]$ and $\gamma_\pi$ a nonsingular
permuton whose support has no interior.

Most nonsingular permutons that have arisen in the literature, and indeed all
permutons of finite entropy (see, e.g., \cite{KKRW}), are supported by
{\em regular} closed sets (a set is regular if it is contained in the closure
of its interior, thus a regular closed set is equal to the closure of its interior).
We show next that for such sets $S$, compliance plus expansiveness is sufficient
(as well as necessary) to support a permuton.

Our characterization, the natural extension of the discrete-case characterizations above,
is often easy to check directly from a geometric description of $S$.

\begin{theorem}\label{thm:regular}
Let $S$ be a regular closed set in the unit square. Then $S$ is the support of a permuton
if and only if it is compliant and expansive.
\end{theorem}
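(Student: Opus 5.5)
Necessity was already observed: compliance and expansiveness are necessary for any system to be solvable. For sufficiency I will use Theorem~\ref{thm:cont}. It suffices to show that every small rational occupied cell $C$ is shavable. Because $S$ is regular, the interior of $C$ meets the interior of $S$, so we may shrink $C$ to a small open square $Q=I\times J$ contained in $S$ (inside $C$). I will shave a uniform measure $\rho=\delta\,\lambda|_{Q}$, with $\delta>0$ small, whose marginals are $\alpha=\delta|J|\lambda|_I$ and $\beta=\delta|I|\lambda|_J$. The task is then to show that for $\delta$ small the shaved system is compliant, i.e.\ for every measurable $U\subseteq X$,
\[
|N(U)|-|U| \;\ge\; \beta(N(U))-\alpha(U).
\]

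First I dispose of the easy cases. If $|U\cap I|>0$, then $N(U)\supseteq J$, so $\beta(N(U))=\delta|I||J|\ge \alpha(U)$ automatically. If $N(U)\cap J$ is null, the right side is $\le 0$ and compliance gives the claim. So only sets $U$ that essentially avoid $I$ but whose neighborhood meets $J$ in positive measure can fail. For these the right side is at most $\delta|I||J|$, and it suffices to show
\[
s:=\inf\{\, s(U) : |U\cap I|=0,\ |N(U)\cap J|>0 \,\}>0 .
\]
Then any $\delta<s/(|I||J|)$ works.

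The main step is to rule out a sequence $U_k$ of such sets with $s(U_k)\to0$, and here I follow the submodularity argument of Theorems~\ref{thm:cont-mfmc} and~\ref{thm:capped}. Slack $s(U)=|N(U)|-|U|$ is submodular, and compliance gives $s\ge0$. Taking $s(U_k)<2^{-k}$, the limit $W=\bigcap_n\bigcup_{k>n}U_k$ satisfies $s(W)=0$, and $W$ still essentially misses $I$.

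The obstacle is that $N(W)$ might lose the part of $J$. I would handle this by working instead with the closed-set tight structure: note $N(U)$ for $S$ closed can be replaced by its measure-theoretic version, and use the regularity to upgrade. Specifically, I would replace each $U_k$ by its tight hull $N(N(U_k))$. This does not increase slack, by the symmetry argument in Theorem~\ref{thm:feasibility}, and the hull contains a positive measure part of $I$ whenever $N(U_k)$ meets $J$, since every $y\in J$ is joined to all of $I$. That contradicts essentially missing $I$ unless the slack bound absorbs it.

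A cleaner route is to choose, by compactness, a subsequence along which $N(U_k)\cap J$ has measure bounded below; then the intersection argument keeps a positive-measure piece of $J$ in $N(W)$. If the measures $|N(U_k)\cap J|$ instead tend to $0$, I would shrink $J$ (choose $Q$ smaller) or use the fact that the contribution $\beta(N(U_k))$ then also tends to $0$, which requires only a relative bound $\beta(N(U))\le \delta|I|\,|N(U)\cap J|$. Comparing this with the slack then reduces to showing $s(U)\ge c\,|N(U)\cap J|$ for some $c>0$, again by a limiting tight set. Once $W$ is tight with $N(W)\cap J$ of positive measure, expansiveness gives $|N(N(W))|=|W|$. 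Yet $N(N(W))\supseteq W\cup I$ up to null sets, with $I$ essentially disjoint from $W$, which is a contradiction. I expect this limiting step, preserving positive mass of $N(W)\cap J$ in the limit, to be the genuine difficulty. Regularity is exactly what allows the open square $Q\subseteq S$ to force $I\subseteq N(N(W))$.
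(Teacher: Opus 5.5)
Your plan fails at the step you flag, and the cause is the choice of a uniform shaving. Shaving $\delta\lambda|_Q$ requires $s(U)\ge c\,|N(U)\cap J|$ for all $U\subseteq X\setminus I$. This inequality can fail for a perfectly legitimate square, even though the theorem holds.

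Here is an example. Let $S$ be the union of the following pieces:
\begin{itemize}
\item the band $|y-x|\le 0.01$ over $x\in[0,0.2]\cup[0.25,1]$;
\item the wedge $\{0.2\le x\le 0.25,\ 0.2\le y\le x+(x-0.2)^2\}$;
\item the squares $[0.8,0.9]\times[0.2,0.3]$ and $[0.9,1]\times[0.1,0.2]$.
\end{itemize}
$S$ is regular closed and contains the diagonal, so it is compliant and $s(U)=|N(U)\setminus U|$. A connectivity check shows every tight set is null or conull, so $S$ is expansive.

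Take $Q=(0.8,0.9)\times(0.2,0.3)$, or any smaller square in it with bottom edge on $y=0.2$, and let $U_\ell=(0.2,0.2+\ell]$. Then $N(U_\ell)=[0.2,0.2+\ell+\ell^2]$, so $s(U_\ell)=\ell^2$ while $|N(U_\ell)\cap J|=\ell+\ell^2$. Hence your infimum $s$ is $0$, the relative bound fails, and no $\delta>0$ gives a compliant shaved system.

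Since $U_\ell\to\emptyset$, there is no limiting tight set whose existence would contradict expansiveness. Your limit argument is valid only when $|N(U_k)\cap J|$ stays bounded below. In that regime it does work, because $s(W)=0$ forces $N(W)=\bigcap_n N(W_n)$ a.e.

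Shrinking $Q$ away from the level $y=0.2$ happens to rescue this particular example. However, you give no rule for choosing $Q$ and no proof that a good square always exists. The ``tight hull'' step is also unjustified: $s(N(N(U)))\le s(U)$ fails in general, for instance in a band of increasing width.

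Your ``easy case'' has a sign error. If $|U\cap I|>0$, the right-hand side is $\beta(N(U))-\alpha(U)=\delta|J|\,|I\setminus U|\ge 0$. So you need $s(U)\ge\delta|J|\,|I\setminus U|$, which is not automatic. This case is the mirror image of the hard case (pass to $Y\setminus N(U)$, which avoids $J$), and it fails for the transpose of the example above.

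The missing idea, which is what the paper does, is to shave a measure adapted to the geometry rather than a uniform one.
\begin{itemize}
\item Take $\beta$ on $J$ dominated by the torque $t(B)=\inf_{U\subseteq X\setminus I}\bigl(|N(U)\cup B|-|U|\bigr)$. Then $\beta(N(U)\cap J)\le t(N(U)\cap J)\le s(U)$ holds by definition, and the density of $\beta$ is free to degenerate, as it must near $y=0.2$ in the example.
\item Your compactness argument is essentially the proof that $t(B)>0$ whenever $|B|>0$: for fixed $B$, the realizer has a neighborhood covering $B$.
\item Beyond that, you need $t$ to be increasing and submodular, in order to produce a measure under $t$ that charges every non-null set.
\item You also need the symmetric measure $\alpha$ on $I$, and the identity $t(I)=t(J)$, so that $\alpha\times\beta/t(J)$ has marginals exactly $\alpha$ and $\beta$.
\end{itemize}
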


\begin{proof} Applying Theorem~\ref{thm:cont}, we need only show that every occupied
cell $C = I \times J$ is shavable.  Moreover, we can take $C \subset S$, since it
suffices to get mass near any point $P$ in the interior of $S$, and by regularity,
we can find an aligned square around such a $P$ that lies entirely within $S$.

The constraint on mass assigned to such a cell (apart from being bounded by $|I|$) is
due entirely to external issues---namely, subsets of $X \setminus I$ (or, respectively,
of $Y \setminus J$) whose slack might become negative if we shave off too much from the
$Y$- (respectively, $X$-) marginal.  It turns out that in this situation we do more
than prove shavability; we can determine precisely the maximum amount that can be shaved.

In the discrete case $I$ and $J$ were merely points ($x$ and $y$) and the quantity
shaved off just a positive real value ($\eps$).  Here, we need to shave off nontrivial
measures $\alpha$ and $\beta$ on $I$ and $J$, respectively, whose removal from the
uniform marginals on $X$ and $Y$ does not cause the slack of any set $U \subseteq X$ to
drop below zero.  The existence of a measure on $C = C \cap S$ with these
marginals is trivial, as we can just take $\rho$ to be the product measure, that is,
the joint distribution arising from independent coordinates.

Suppose $U \subseteq X \setminus I$.  After the removal of $\beta$ from $\lambda$ (Lebesgue
measure on $Y$), and $\alpha$ from $\lambda$ on $X$, $|U|$ will be unaffected but $|N(U)|$ will
drop by the amount $\beta(N(U) \cap J)$.  So for any subset $B \subseteq J$,
we must somehow ensure that $\beta(B) \le s(U)$ for any $U \subseteq X \setminus I$
whose neighborhood contains $B$.

What about sets $U \subseteq X$ that intersect $I$?  Then $N(U) \supseteq J$, thus
$(Y \setminus N(U)) \cap J = \emptyset$, and $Y \setminus N(U)$ will be dealt with when
we construct $\alpha$.

The key concept below is a function of subsets of $J$, which we call {\em torque}.
For measurable $B \subseteq J$ we define the torque $t(B)$ by
$$
t(B) := \inf_{U \subseteq X \setminus I}\tau(B,U)
$$
where
$$
\tau(B,U) = s(U) + |B \setminus N(U)| = |N(U)| - |U| + |B \setminus N(U)| = |N(U) \cup B| - |U|.
$$

\begin{lemma}\label{lemma:submod}
Torque is a nonnegative, increasing, submodular function, such that for all
$B \subseteq J$,  $t(B) \le |B|$, and $t(B) > 0$ iff $|B|>0$.
\end{lemma}

\begin{proof}
We use the last line in the above display:
$$
t(B) = \inf_{U \subseteq X \setminus I}{|N(U) \cup B| - |U|}.
$$
We note first that the infimum is realized, that is, for each $B$
there is a $U$ such that $t(B) = |N(U) \cup B| - |U|$.  The argument
at the beginning of the proof of Theorem~\ref{thm:cont-mfmc} works as it does
there, since $|N(U) \cup B| - |U|$, being a submodular function
(of $U$) minus a modular function, is submodular in $U$.

That $t$ is nonnegative, Lebesgue-bounded  and increasing (that is, %t(B) \le t(B')$
when $B \subset B'$) is trivial, likewise that $t(B) = 0$ when $|B|=0$.
For the converse of the latter,
if $t(B)=0$ but $|B|>0$, then there is a realizer $U \subseteq X \setminus I$ 
with $|U| \ge |B| >0$ with slack 0. But then, by expansiveness of $S$,
$|N(N(U))| = |U|$ which is impossible because $N(N(U)) \supset I$.

A bit more is needed to get submodularity of $t(B)$ as a function of $B$.
Let $B, B' \subseteq J$,
with $t(B)$ realized by $U$ and $t(B')$ by $U'$.  We will use $U \cap U'$
as a witness for $B \cap B'$ and $U \cup U'$ for $B \cup B'$.  Since
the $-|U|$ terms cancel, it suffices to show that
$$
|N(U \cap U') \cup (B \cap B')| + |N(U \cup U') \cup (B \cup B') \le |N(U) \cup B| 
+ |N(U') \cup B'|.
$$
To do this we classify elements of $Y$ according to whether they belong to
$B \setminus B'$, $B \cap B'$, $B' \setminus B$, or none of them; and
according to whether they belong to $N(U) \setminus N(U')$, $N(U \cap U')$,
$N(U') \setminus N(U)$, or none of them.  It is then routine to check that
an element of $Y$ in any of these 16 categories appears in at least as many
of the two terms on the right-hand side of the inequality as on the left.
\end{proof}

\begin{lemma}\label{lemma:measure}
There is a measure $\beta$ on $J$ of total mass $t(J)$ that is bounded by $t$,
such that $\beta(B) > 0$ when $t(B) > 0$.
\end{lemma}

\begin{proof}
Let $J = [a,b]$; then, since $t$ is increasing and (because it is submodular)
subadditive, $t([a,\cdot])$ is the cumulative distribution of a measure
$\beta_a$ on $J$ of total mass $t(J)$ which is bounded by $t$ and thus absolutely
continuous with respect to Lebesgue measure. 

To ensure the measure of non-Lebesgue-null sets is positive, let $x \in J$ and
let $\beta_x$ be obtained by rotating $\beta_a$, that is, by setting $\beta_x([x,v])
= t([x,v])$ for $v > x$ and $\beta_x([u,x]) = t(J) - t([x,b] \cup [a,u])$ for $u < x$. 
Then we set $\beta(B) = \int_a^b \beta_x(B) dx/(b-a)$. \end{proof}

It is perhaps worth noting that even if $t$ were not increasing, submodularity is
enough to guarantee a measure underneath $t$ of mass $t(J)$.  The discrete analog of
this statement follows from the ``sandwich lemma" of Andr\'as Frank \cite{Fr},
which says that if a submodular set function dominates a supermodular one, then
there is a modular function between them.  Here, we could just discretize with
dyadic intervals, define the supermodular function to agree with $t$ on $J$
but otherwise 0, and un-discretize with the help of compactness.

Having defined the measure $\beta$ on $J$, we now repeat everything with the roles
of $X$ and $Y$, and of $I$ and $J$, exchanged. The result is a measure $\alpha$ on $I$
which, it turns out, has the same total mass.

\begin{lemma}\label{lemma:tItJ} $t(I) = t(J)$.
\end{lemma}

\begin{proof}
We show $t(I) \le t(J)$, and conclude equality by symmetry.  Suppose
$U \subseteq X \setminus I$ witnesses $t(J)$, and let $U' = Y \setminus N(U)$,
so that $s(U')=s(U)$.

We first tackle the (easier) case where $N(U) \supseteq J$, thus $t(J) = s(U)$.
We have $U' \subseteq Y \setminus J$ and thus $U'$ is a valid witness for $t(I)$.
If $U$ is ``full," meaning $N(U') = X \setminus U$, then we have $N(U') \supseteq I$
hence $t(I) \le s(U') = (1 - |U|) - (1 - |N(U)|) = s(U) = t(J)$.

To the extent that $U$ is not full, $U'$'s slack falls short of $U$'s:
$$
s(U') = |N(U')| - |U'| \le 1 - |U| - |I \setminus N(U')| - (1 - |N(U)|)
$$
$$
= |N(U)| - |U| - |I \setminus N(U')| = t(J) - |I \setminus N(U')|.
$$

Thus $t(I) \le \tau(I,U') \le s(U') + |I \setminus N(U')| \le t(J)$ and we
are done with this case, not having used that $|I|=|J|$.

Now suppose that $N(U)$ does not cover $J$, thus $t(J) = \tau(J,U) = s(U) +
|J \setminus N(U)|$.  Then $U' = Y \setminus N(U)$ intersects $J$, thus is not
eligible to be a witness for $t(I)$.  Instead we use $U'' := U' \setminus J$.

Then
$$
t(I) \le \tau(I,U'') = s(U'') + |I \setminus N(U'')| = |N(U'')| - |U''|
+ |I \setminus N(U'')|
$$
$$
= |I| + |N(U'') \setminus I| - (1 - |N(U)| - |J \setminus N(U)|).
$$

In comparing this quantity with $t(J) = |N(U)| - |U| + |J \setminus N(U)|$, the
$|J \setminus N(U)|$ and $|N(U)|$ terms cancel, so we only need to show
$|I| - 1 + |N(U'') \setminus I| \le -|U|$, i.e., $|I| + |U| + |N(U'') \setminus I|
\le 1$.  But $I$, $U$, and $N(U'') \setminus I$ are disjoint subsets of $X$, and
the conclusion follows.
\end{proof}

We have now essentially completed the proof of Theorem~\ref{thm:regular}.
Define a nonsingular measure $\rho$ on the cell $C = I \times J$ ``independently"
from $\alpha$ and $\beta$, by
$$
\rho(A \times B) = \alpha(A) \beta(B)/t(J).
$$
Shaving off the entire product measure $\alpha \times \beta/t(J)$ on $C = C \cap S$ 
removes $\alpha$ and $\beta$ from the uniform $X$- and $Y$-marginals but because
$\alpha$ and $\beta$ are bounded by torque, no subset $U$ of $X$ or $V$ of $Y$ becomes
noncompliant (although some do become tight).

What if $U$ intersects $I$?  Then $Y \setminus N(U)$ is disjoint from $J$, and a symmetric
argument shows $Y \setminus N(U)$, and therefore also $U$ itself, is compliant in
the subpermuton setting.  \end{proof}

\section{Concluding Remarks}

The above theorems together with some simple observations suffice to
settle the case for many sets $S$, including those of Fig.~\ref{fig:test-sets},
repeated as Fig.~\ref{fig:answer-sets} below.

The first of these (in clockwise order from the top left) supports many permutons,
including one with mass distributed uniformly on $S$; the second supports the
permuton obtained from a uniform distribution on a sphere, projected onto its
shadow.  The third fails because (e.g.) the set $[0,\frac14] \cup [\frac34,1] \subset X$
is not expansive; the fourth fails because $[.49,.51]$ violates the
Hall condition.  In the fifth (lower right of Fig.~\ref{fig:test-sets}),
$(\frac14,1]$ flunks compliance.
In the sixth, the steepness of the curves causes $[.1,.9]$ to be noncompliant;
but the better-controlled derivatives in the seventh allow mass split between
the two curves with uniform marginals. Finally, in the last case, the dangerous
set is $[\frac23,1]$ but its neighborhood $[\frac35,1]$ gives it positive slack.

\begin{figure}[ht!]
        \centering
        \includegraphics[width=.8\linewidth,trim=0cm 9cm 0cm 6cm, clip=true]{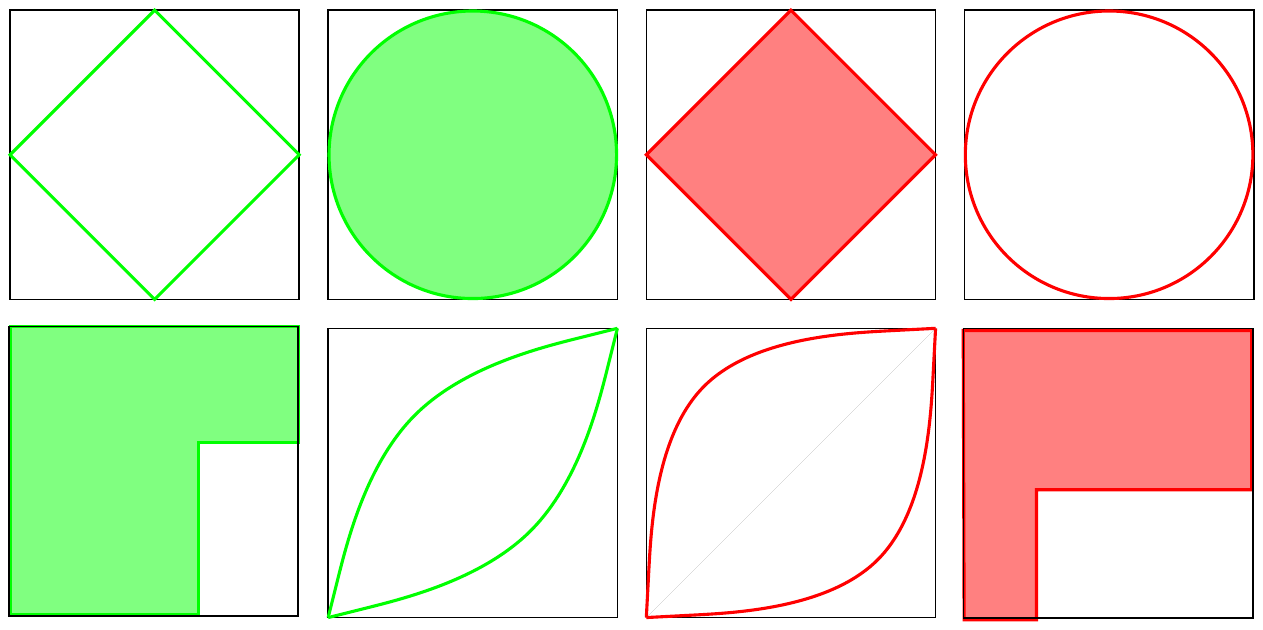}
        \caption{The four subsets of $[0,1]^2$ on the left side support a permuton,
        the rest do not.}
        \label{fig:answer-sets}
\end{figure}


\begin{thebibliography}{1234}

\bibitem {BBFGP} F.\ Bassino, M.\ Bouvel, V.\ F\'eray, L.\ Gerin, and A.\ Pierrot,
The Brownian limit of separable permutations, {\em Ann.\ Prob.} {\bf 46} \#4 (2018), 2134--2189.

\bibitem{Bo} J.\ Borga, The skew Brownian permuton: A new universality class for random constrained permutations,
{\em Proc.\ London Math.\ Soc.} {\bf 126} \#6 (June 2023), 1842--1883.

\bibitem{BDMW} J.\ Borga, S.\ Das, S.\ Mukherjee, and P.\ Winkler, Large deviation principle for random
permutations, {\em International Math.\ Res.\ Notices}, Volume 2024, Issue 3 (February 2024), 2138--2191.
\verb+https://doi.org/10.1093/imrn/rnad096+.

\bibitem{C} V.\ Chv\'atal, {\em Linear Programming}, W.H. Freeman, 1983.

\bibitem{DK} P.\ Devlin and J.\ Kahn, Perfect fractional matchings in k-out hypergraphs,
{\em Electronic Journal of Combinatorics} {\bf 24} \#3 (2017), paper \#P3.60.

\bibitem{Fr} A.\ Frank, An algorithm for submodular functions on graphs, {\em Ann.\ 
Disc.\ Math.} {\bf 16} (1982), 97--120.

\bibitem{HKMS} C.\ Hoppen, Y.\ Kohayakawa, C.G.\ Moreira, B.\ R\'{a}th, and R.M.\ Sampaio, Limits of permutation
sequences, {\em J.\ Combin.\ Theory B} {\bf 103} (2013) 93--113. 

\bibitem{KKRW} R.\ Kenyon, D. Kr\'{a}l', C. Radin, and P. Winkler, Permutations with fixed pattern densities,
{\em Random Structures \& Algorithms} {\bf 56} \#2 (2020) (online 31 July 2019), 220--250.  doi:10.1002/rsa.20882.

\bibitem{Kw} M.\ Kwa\'snicki (\verb+https://mathoverflow.net/users/108637+),
A possible measure-theoretic pathology, URL (version: 2023-09-22): \verb+https://mathoverflow.net/q/454673+

\bibitem{L2} L.\ Lov\'{a}sz, Submodular functions and convexity. In: Bachem, A., Korte, B., Grötschel, M.
(eds) {\em Mathematical Programming: The State of the Art}. Springer, Berlin, Heidelberg (1983), 235--257.
\verb+https://doi.org/10.1007/978-3-642-68874-4_10+

\bibitem{Mu} S.\ Mukherjee, Estimation in exponential families on
  permutations, {\em Ann.\ Stat.} {\bf 44} \#2 (April 2016), 853--875.

\bibitem{Sklar} A.\ Sklar, Fonctions de répartition à $n$ dimensions et leurs marges,
{\em Publ.\ Inst.\ Statist.\ Univ.\ Paris} {\bf 8} (1959), 229--231.

\bibitem{Tr} J.\ Trashorras, Large deviations for symmetrised
  empirical measures, {\em J.\ Theor.\ Probab.} {\bf 21} (2008), 397--412.

\end{thebibliography}
\end{document}